\documentclass[11pt]{article}

\usepackage{iftex}
\ifPDFTeX
  \usepackage[utf8]{inputenc}
  \usepackage[T1]{fontenc}
\else
  \usepackage{fontspec}
  \defaultfontfeatures{Ligatures=TeX}
\fi
\PassOptionsToPackage{colorlinks,urlcolor=blue,citecolor=blue,linkcolor=blue}{hyperref}
\usepackage{xurl}
\usepackage{booktabs}
\usepackage{amsfonts}
\ifdefined\DisableMicrotype
\else
  \ifPDFTeX
    \ifnum\pdfoutput>0
      \usepackage[protrusion=true,expansion=true,tracking=true]{microtype}
    \else
      \usepackage[protrusion=true,expansion=false,tracking=true]{microtype}
    \fi
  \else
    \ifXeTeX
      \usepackage[protrusion=true,expansion=false,tracking=false]{microtype}
    \else
      \usepackage[protrusion=true,expansion=true,tracking=true]{microtype}
    \fi
  \fi
\fi
\usepackage{nicefrac}
\usepackage{bbm}
\usepackage{xspace}
\usepackage{algpseudocode}
\usepackage{algorithm}
\usepackage{bm}
\usepackage{amsmath}
\usepackage{amsthm}
\usepackage{amssymb}
\usepackage{thmtools}
\usepackage{thm-restate}
\usepackage{float}
\usepackage{subcaption}
\usepackage{mathtools}
\usepackage[most]{tcolorbox}
\usepackage{xparse}
\usepackage{paralist}
\usepackage{enumitem}
\usepackage{fancyhdr}
\usepackage{multirow}
\usepackage{pifont}
\usepackage{hyperref}
\usepackage{tikz}
\usepackage[capitalize,nameinlink,noabbrev]{cleveref}
\newif\ifneuripsbuild
\ifdefined\NeuripsBuild
  \neuripsbuildtrue
\else
  \neuripsbuildfalse
\fi
\newcommand{\IfNeuripsBuild}[1]{%
  \ifneuripsbuild
    #1%
  \fi
}
\newcommand{\IfNotNeuripsBuild}[1]{%
  \ifneuripsbuild
  \else
    #1%
  \fi
}
\newif\ifusedeferredproofs
\ifdefined\EnableDeferredProofs
  \usedeferredproofstrue
\else
  \usedeferredproofsfalse
\fi
\ifusedeferredproofs
  \usepackage[
    createShortEnv,
    commandRef=Cref,
    disablePatchSection
  ]{proof-at-the-end}
\fi
\usetikzlibrary{arrows.meta,positioning,shapes.geometric}

\makeatletter
\@ifundefined{theHALG@line}
  {\def\theHALG@line{\thealgorithm.\arabic{ALG@line}}}
  {\renewcommand{\theHALG@line}{\thealgorithm.\arabic{ALG@line}}}
\makeatother

\DeclareMathOperator{\interior}{\mathrm{Int}}
\DeclareMathOperator{\relinterior}{\mathrm{rel.int}}
\DeclareMathOperator*{\argmin}{argmin}

\DeclareMathOperator{\dist}{dist}

\newcommand{\innp}[1]{\left\langle #1 \right\rangle}

\newcommand{\RR}{\mathbb{R}}
\newcommand{\NN}{\mathbb{N}}
\newcommand{\norm}[1]{\left\| #1 \right\|}
\newcommand{\abs}[1]{\left| #1 \right|}

\newcommand{\cC}{\mathcal{C}}
\newcommand{\cM}{\mathcal{M}}

\newcommand{\defeq}{\stackrel{\mathrm{\scriptscriptstyle def}}{=}}

\newcommand{\leanverified}{\xspace\textup{\normalfont\,[\textsc{Lean}]}\xspace}
\newif\ifshowclarify
\showclarifytrue

\newif\ifshowchanges
\showchangestrue
\newcommand{\HideChanges}{\showchangesfalse}

\NewDocumentEnvironment{changedblock}{+b}{%
  \ifshowchanges
    \begingroup\color{blue}#1\endgroup
  \else
    #1
  \fi
}{}

\newif\ifshowresearchboxes
\newif\ifshowmotivationboxes
\newif\ifshowinterpretationboxes
\newif\ifshowverificationboxes

\showresearchboxestrue
\showmotivationboxestrue
\showinterpretationboxestrue
\showverificationboxestrue

\newcommand{\HideAllResearchBoxes}{%
  \showresearchboxesfalse
  \showmotivationboxesfalse
  \showinterpretationboxesfalse
  \showverificationboxesfalse
}

\NewDocumentEnvironment{motivation}{+b}{%
  \ifshowresearchboxes
    \ifshowmotivationboxes
      \begin{tcolorbox}[
        breakable,
        colback=blue!4,
        colframe=blue!45!black,
        title=Motivation,
        fonttitle=\bfseries
      ]
      #1
      \end{tcolorbox}
    \fi
  \fi
}{}

\NewDocumentEnvironment{interpretation}{+b}{%
  \ifshowresearchboxes
    \ifshowinterpretationboxes
      \begin{tcolorbox}[
        breakable,
        colback=green!4,
        colframe=green!35!black,
        title=Interpretation,
        fonttitle=\bfseries
      ]
      #1
      \end{tcolorbox}
    \fi
  \fi
}{}

\NewDocumentEnvironment{verification}{+b}{%
  \ifshowresearchboxes
    \ifshowverificationboxes
      \begin{tcolorbox}[
        breakable,
        colback=gray!5,
        colframe=gray!50,
        title=Verification,
        fonttitle=\bfseries
      ]
      #1
      \end{tcolorbox}
    \fi
  \fi
}{}

\theoremstyle{plain}
\numberwithin{equation}{section}
\newtheorem{theorem}{Theorem}[section]
\numberwithin{theorem}{section}
\newtheorem{corollary}[theorem]{Corollary}

\newtheorem{lemma}[theorem]{Lemma}
\newtheorem{proposition}[theorem]{Proposition}

\theoremstyle{definition}
\newtheorem{definition}[theorem]{Definition}
\newtheorem{remark}[theorem]{Remark}
\newtheorem{example}[theorem]{Example}

\newcounter{deferredproofanchor}[section]

\crefalias{deferredproofanchor}{section}

\newcommand{\IfDeferredProofs}[1]{%
  \ifusedeferredproofs
    #1%
  \fi
}
\newcommand{\DeferredProofNotice}[1]{%
  \par\noindent
  \ifusedeferredproofs
    Proof deferred to \hyperref[#1]{\Cref*{#1} (\nameref*{#1})}.%
  \else
    Proof deferred to \hyperref[#1]{\Cref*{#1}}.%
  \fi
  \par
}
\newcommand{\DeferredProofAnchor}[1]{%
  \refstepcounter{deferredproofanchor}%
  \label{#1}%
}
\ifusedeferredproofs
  \NewDocumentEnvironment{deferredtheorem}{o +b}{%
    \IfNoValueTF{#1}
      {\begin{theoremE}[][proof at the end, restate, category=deferred]}%
      {\begin{theoremE}[#1][proof at the end, restate, category=deferred]}%
    #2%
    \end{theoremE}%
  }{}
  \NewDocumentEnvironment{deferredlemma}{o +b}{%
    \IfNoValueTF{#1}
      {\begin{lemmaE}[][proof at the end, restate, category=deferred]}%
      {\begin{lemmaE}[#1][proof at the end, restate, category=deferred]}%
    #2%
    \end{lemmaE}%
  }{}
  \NewDocumentEnvironment{deferredproposition}{o +b}{%
    \IfNoValueTF{#1}
      {\begin{propositionE}[][proof at the end, restate, category=deferred]}%
      {\begin{propositionE}[#1][proof at the end, restate, category=deferred]}%
    #2%
    \end{propositionE}%
  }{}
  \NewDocumentEnvironment{deferredproof}{o +b}{%
    \IfNoValueF{#1}{\DeferredProofNotice{#1}}%
    \IfNoValueTF{#1}{%
      \begin{textAtEnd}[category=deferred]
\begin{proof}
#2
\end{proof}
      \end{textAtEnd}
    }{%
      \eraseIfNeeded{deferred}%
      \appendtofile{\pratendGeneratePrefixFile{\jobname}deferred.tex}{%
\string\makeatletter\string\Hy@SaveLastskip%
\string\phantomsection\string\label{#1}%
\string\Hy@RestoreLastskip\string\makeatother%
\string\begin{proof}%
\detokenize{#2}%
\string\end{proof}}%
    }%
  }{}
  \NewDocumentEnvironment{deferredproofsubsection}{m m +b}{%
    \DeferredProofNotice{#1}%
    \begin{textAtEnd}[category=deferredproofsections]
\subsection{#2}
\label{#1}
\begin{proof}
#3
\end{proof}
    \end{textAtEnd}
  }{}
  \newcommand{\PrintDeferredProofs}{%
    \IfFileExists{\pratendGeneratePrefixFile{\jobname}deferred.tex}{\printProofs[deferred]}{}%
  }
  \newcommand{\PrintDeferredProofSections}{%
    \IfFileExists{\pratendGeneratePrefixFile{\jobname}deferredproofsections.tex}{\printProofs[deferredproofsections]}{}%
  }
\else
  \NewDocumentEnvironment{deferredtheorem}{o +b}{%
    \IfNoValueTF{#1}{\begin{theorem}}{\begin{theorem}[#1]}%
    #2%
    \end{theorem}%
  }{}
  \NewDocumentEnvironment{deferredlemma}{o +b}{%
    \IfNoValueTF{#1}{\begin{lemma}}{\begin{lemma}[#1]}%
    #2%
    \end{lemma}%
  }{}
  \NewDocumentEnvironment{deferredproposition}{o +b}{%
    \IfNoValueTF{#1}{\begin{proposition}}{\begin{proposition}[#1]}%
    #2%
    \end{proposition}%
  }{}
  \NewDocumentEnvironment{deferredproof}{o +b}{%
    \IfNoValueTF{#1}{%
      \begin{proof}
        #2%
      \end{proof}
    }{%
      \begin{proof}
        \DeferredProofAnchor{#1}%
        #2%
      \end{proof}
    }%
  }{}
  \NewDocumentEnvironment{deferredproofsubsection}{m m +b}{%
    \subsection{#2}\label{#1}
    \begin{proof}
      #3%
    \end{proof}
  }{}
  \newcommand{\PrintDeferredProofs}{}
  \newcommand{\PrintDeferredProofSections}{}
\fi

\graphicspath{{images/}}
\makeatletter
\def\mathcolor#1#{\@mathcolor{#1}}
\def\@mathcolor#1#2#3{%
  \protect\leavevmode
  \begingroup
    \color#1{#2}#3%
  \endgroup
}
\makeatother

\makeatletter
\newenvironment{proof*}[1][\proofname]{\par
  \pushQED{\qed}%
  \normalfont \partopsep=\z@skip \topsep=\z@skip
  \trivlist
  \item[\hskip\labelsep
        \itshape
    #1\@addpunct{.}]\ignorespaces
}{%
  \popQED\endtrivlist\@endpefalse
}
\makeatother

\makeatletter
\AtBeginDocument{%

  \@ifpackageloaded{cleveref}{%
    \crefname{theorem}{Theorem}{Theorems}
    \Crefname{theorem}{Theorem}{Theorems}
    \crefname{corollary}{Corollary}{Corollaries}
    \Crefname{corollary}{Corollary}{Corollaries}
    \crefname{conjecture}{Conjecture}{Conjectures}
    \Crefname{conjecture}{Conjecture}{Conjectures}
    \crefname{assumption}{Assumption}{Assumptions}
    \Crefname{assumption}{Assumption}{Assumptions}
    \crefname{lemma}{Lemma}{Lemmas}
    \Crefname{lemma}{Lemma}{Lemmas}
    \crefname{proposition}{Proposition}{Propositions}
    \Crefname{proposition}{Proposition}{Propositions}
    \crefname{claim}{Claim}{Claims}
    \Crefname{claim}{Claim}{Claims}
    \crefname{fact}{Fact}{Facts}
    \Crefname{fact}{Fact}{Facts}
    \crefname{definition}{Definition}{Definitions}
    \Crefname{definition}{Definition}{Definitions}
    \crefname{remark}{Remark}{Remarks}
    \Crefname{remark}{Remark}{Remarks}
    \crefname{example}{Example}{Examples}
    \Crefname{example}{Example}{Examples}
    \crefname{observation}{Observation}{Observations}
    \Crefname{observation}{Observation}{Observations}
    \crefname{maintheorem}{Main Theorem}{Main Theorems}
    \Crefname{maintheorem}{Main Theorem}{Main Theorems}
  }{}
}
\makeatother

 \HideAllResearchBoxes
\HideChanges

\usepackage[square,comma,sort&compress]{natbib}
\usepackage[margin=1in]{geometry}
\ifPDFTeX
  \usepackage{charter}
\fi
\usepackage{tabularx}

\title{Frank--Wolfe Beyond $1/t$ Convergence}

\author{Sebastian Pokutta\\
Institute of Mathematics, Technische Universit\"at Berlin and\\
Zuse Institute Berlin, Germany\\
\texttt{pokutta@zib.de}}

\date{\today}

\begin{document}

\maketitle

\begin{abstract}
We study the vanilla Frank--Wolfe algorithm for smooth convex minimization over compact convex sets. Well-known lower bounds establish a uniform worst-case $\Omega(1/t)$ primal-gap barrier in the general smooth convex case, and faster convergence usually requires favorable function properties such as Hölder error bounds or strong convexity. We present a new \emph{Local Dual Sharpness (LDS)} condition on the feasible region and the LMO used by the algorithm. For every fixed smooth convex instance satisfying LDS, Frank--Wolfe satisfies $f(x_t)-f^\star=o(1/t)$ under short steps, exact line search, and the classical open-loop rule. This excludes an asymptotic $\Omega(1/t)$ trajectory lower bound on any one such instance; this is not to be confused with a uniform minimax statement. The condition generalizes and localizes uniform convexity of sets and is satisfied globally by every uniformly convex set. To our knowledge, this gives the first fixed-instance $o(1/t)$ result on uniformly convex sets without an objective growth, gradient-lower-bound, or strong-convexity assumption. Combining LDS with a local Hölder error bound quantifies the rate.
\end{abstract}

\section{Introduction}

We consider optimization problems of the form $\min_{x \in \cC} f(x)$, where $f$ is smooth and convex and $\cC$ is a compact convex set. The Frank--Wolfe algorithm \citep{frank1956algorithm}, also called the conditional gradient method \citep{levitin1966constrained}, is one
of the classical projection-free methods for smooth constrained convex
optimization; see \citet{pokutta2023short,
braun2025cgm} for an overview. Its generic convergence rate on a compact convex set is $O(1/t)$, where the lower-bound instance arises from minimizing $f(x) = \norm{x}^2_2$ over the probability simplex \citep{lan2013complexity,jaggi2013revisiting}. Higher convergence rates typically require assumptions on the structure of $f$, e.g., strong convexity or lower-bounded gradients, but it is \emph{a priori} not clear which assumptions are necessary. For example, only recently in \citet{halbey2026lower} it was shown that in the case of strongly convex functions over smooth strongly convex sets the worst-case rate can be as bad as $\Omega(1/t^2)$, answering a long-standing open question arising from \citet{garber2015faster}; shortly afterwards it was shown that for nonsmooth strongly convex sets this even holds for any LMO-based method, i.e., those that access the feasible region solely through linear minimization \citep{grimmer2026uniform}. The overall convergence landscape is complex for the Frank--Wolfe algorithm and we discuss it in detail in the related work section.

In this work we introduce the \emph{Local Dual Sharpness (LDS)} condition, imposed on the feasible region together with its Linear Minimization Oracle (LMO) around a nonempty reference set $M$. The definition and our conditional theorems allow a general $M$; in the principal instance-level results we take $M=\cM=\argmin_{x\in\cC}f(x)$. Thus, for locally curved sets, applicability depends on where the objective places its minimizers; we discuss the precise assumptions below. Once LDS holds around $\cM$, Frank--Wolfe with exact line search, short steps, or the classical open-loop step-size satisfies $f(x_t)-f^\star=o(1/t)$ for that fixed smooth convex instance, without any additional objective growth, strong-convexity, or gradient-lower-bound assumption. For globally uniformly convex sets LDS holds around every reference set, and hence the conclusion applies to every smooth convex objective. The improvement is qualitative and instance-dependent. In particular, we do not claim a uniform burn-in or minimax modulus. Quantitative rates are recovered below by adding a local Hölder error bound.

\IfNotNeuripsBuild{%
\begin{table}[!ht]
\footnotesize
\centering
\setlength{\tabcolsep}{4pt}
\renewcommand{\arraystretch}{1.12}
\begin{tabularx}{\textwidth}{@{}>{\raggedright\arraybackslash}p{1.25cm}>{\raggedright\arraybackslash}p{1.9cm}>{\raggedright\arraybackslash}X>{\raggedright\arraybackslash}p{3cm}>{\raggedright\arraybackslash}p{3.00cm}c@{}}
\toprule
$f$ & $\cC$ & minimizer / regime & rule & bound & key \\
\midrule
\multicolumn{6}{@{}l}{\emph{\textbf{Classical compact-convex regime}}}\\
convex & convex & unrestricted & OL / SS / LS & $O(1/t)$ & [A] \\
convex & convex & unrestricted & FW / FO-LMO & $\Omega(1/t)$ & [B] \\
\addlinespace[0.2em]
\multicolumn{6}{@{}l}{\emph{\textbf{Location-based acceleration}}}\\
SC & convex & $x^\star \in \interior(\cC)$ & SS / LS & $O(e^{-rt})$ & [C] \\
convex & UC$(q)$ & LBG, $\inf_{x\in\cC}\norm{\nabla f(x)} > 0$ & SS / LS & \(O(e^{-rt})\) for \(q=2\); \(O(t^{-q/(q-2)})\) for \(q>2\) & [D] \\
convex & UC$(q)$ & LBG, $\inf_{x\in\cC}\norm{\nabla f(x)} > 0$ & OL \(\ell/(t+\ell)\) & \(O(t^{-\ell})\) for \(q=2\); \(O(t^{-\ell+\varepsilon}+t^{-q/(q-2)})\) for \(q>2\) & [E] \\
\addlinespace[0.2em]
\multicolumn{6}{@{}l}{\emph{\textbf{Curved sets without location information}}}\\
HEB\((\theta)\) & UC$(q)$ & unrestricted & SS / LS & \(O(t^{-1/(1-2\theta/q)})\) & [F] \\
HEB\((\theta)\) & UC$(q)$ & unrestricted & OL \(\ell/(t+\ell)\) & \(O(t^{-\ell+\varepsilon}+t^{-1/(1-2\theta/q)})\) & [G] \\
SC & SC & unrestricted & SS / LS & \(O(1/t^2)\) & [H] \\
SC & SC & unrestricted & SS / LS & \(\Omega(1/t^2)\) & [I] \\
SC & SC & unrestricted & deterministic FO-LMO & \(\Omega(1/t^2)\) & [J] \\
SC quad. & \(\ell_p\)-ball, \(p\ge 3\) & zero-gradient boundary minimizer & SS / LS & \(\Theta(t^{-p/(p-1)})\) & [K] \\
\addlinespace[0.2em]
\multicolumn{6}{@{}l}{\emph{\textbf{Wolfe's proper-face regime}}}\\
SC & polytope & $x^\star \in \relinterior(F)$, \(\dim F \ge 1\) & SS / LS & \(\Omega(1/(t\log^{2+\delta}t))\) i.o. for every \(\delta>0\) & [L] \\
HEB\((\theta)\) & polytope & $x^\star \in \relinterior(F)$, active face identified & OL \(\ell/(t+\ell)\) & \(O(t^{-1/(1-\theta)})\) & [M] \\
\addlinespace[0.2em]
\multicolumn{6}{@{}l}{\emph{\textbf{Beyond-\(1/t\) via local dual sharpness (this paper)}}}\\
convex & convex & LDS, unrestricted & SS / LS & pointwise \(o(1/t)\) &  \\
convex & convex & LDS$(q)$, unrestricted & OL \(\ell/(t+\ell)\), \(\ell\ge2\) & pointwise \(o(1/t)\) &  \\
HEB\((\theta)\) & convex & LDS$(q)$, unrestricted & SS / LS & \(O(t^{-1/(1-2\theta/q)})\) &  \\
HEB\((\theta)\) & convex & LDS$(q)$, unrestricted & OL \(2/(t+2)\) & \(O(t^{-1/(1-2\theta/q)})\) & \\
\bottomrule
\end{tabularx}
\caption{Known convergence-rate landscape for vanilla Frank--Wolfe. All objective classes in the \(f\)-column are assumed convex and smooth, and all feasible sets in the \(\cC\)-column are assumed compact; ``SC'' means strongly convex, ``SC quad.'' means strongly convex quadratic \(f\), UC$(q)$ means power-type \(q\)-uniform convexity, HEB\((\theta)\) denotes a (possibly local) H\"olderian error bound with exponent \(0<\theta\le 1/2\) (manuscript uses \(r=1/\theta\)), and LDS denotes local dual sharpness, with LDS$(q)$ indicating the corresponding power \(q\). SS is the global short-step rule, LS is exact line search, OL denotes the displayed open-loop rule or family, and FO-LMO denotes a deterministic first-order method with one linear-minimization-oracle call per iteration. The LDS little-$o$ rows are fixed-instance asymptotic statements. Rows [B], [I], and [J] are uniform worst-case or finite-horizon lower bounds; the remaining rows use the quantifiers stated in their entries and the accompanying discussion. For respective references see keys in \Cref{rem:current-sota-long}.}
\IfNotNeuripsBuild{\label{tab:known-fw-rates}}
\label{tab:known-fw-rates-long}
\end{table}

\subsection{Related Work and State of the Art\IfNeuripsBuild{ (Comprehensive Version)}}
\IfNotNeuripsBuild{\label{rem:current-sota}}
\label{rem:current-sota-long}

\Cref{tab:known-fw-rates-long} summarizes the vanilla Frank--Wolfe rate landscape most
relevant to the present paper. The organizing principle is the mechanism that
either improves or obstructs the classical \(O(1/t)\) rate. We also separate
large-scale oracle lower bounds from smooth low-dimensional witnesses whenever
both are available.

\paragraph{Classical compact-convex regime.}
On a general compact convex set, vanilla Frank--Wolfe with exact line search,
global short steps, or the classical open-loop rule has the familiar
\(O(1/t)\) primal-gap guarantee [A]. The line-search analysis goes back to
\citet{frank1956algorithm}, while \citet{levitin1966constrained} and
\citet{jaggi2013revisiting} give the modern smooth-convex formulation. This
baseline is sharp in general: simplex-type examples in \cite{jaggi2013revisiting} already show an
\(\Omega(1/t)\) obstruction in the low-iteration regime, and
\citet{lan2013complexity} extends the lower-bound picture to deterministic
first-order methods that access the feasible region through one LMO call per
iteration [B].

\paragraph{Location-based acceleration.}
A first way to beat \(1/t\) is to use information about the position of the
minimizer. If the minimizer lies in the interior of the feasible region, then
short step and exact line search become linear [C], in a line of work going back
to \citet[\S 8]{wolfe70} and \citet{gm86}. A second mechanism is the
lower-bounded-gradient (LBG) regime on curved sets. When
\(\inf_{x\in\cC}\norm{\nabla f(x)}>0\) and \(\cC\) is \(q\)-uniformly convex,
the Frank--Wolfe gap controls the displacement strongly enough to give linear
convergence for \(q=2\) and faster-than-\(1/t\) polynomial decay for \(q>2\).
This picture is classical for strongly convex sets
\citep{levitin1966constrained,demyanov1970approximate,dunn1979rates}, is
presented in modern form by \citet{garber2015faster}, and is extended to
uniformly convex sets and affine-invariant growth frameworks by
\citet{kerdreux2021projection,pena2023affine} [D]. For open-loop rules
\(\gamma_t=\ell/(t+\ell)\), the same mechanism also yields accelerated rates;
the current state of the art is the affine-invariant treatment of
\citet{WPP2023}, building on the earlier \(\eta_t=4/(t+4)\) analysis of
\citet{WKP2022} [E]. Note that rows [C] and [D] are not exhaustive, as they leave out the boundary case in which the unconstrained minimizer lies exactly on $\partial\cC$, so that $\nabla f(x^\star)=0$. In fact, it is this boundary zero-gradient regime that is used by the hard SC/SC instances in [I] and [J].

\paragraph{Curved sets without location information.}
A different family of results does not assume either an interior minimizer or a
gradient bounded away from zero. Here the improvement comes from combining
curvature of the feasible region with growth on the objective. In particular,
\citet{kerdreux2021projection} [F] and \citet{pena2023affine} [G] show that a
H\"olderian error bound together with uniform convexity of the set yields
explicit polynomial rates for short step, line search, and open loop.
Strongly convex objectives over strongly convex sets fit into this picture and
give the familiar \(O(1/t^2)\) upper bound of \citet{garber2015faster} [H]. Recent
lower bounds show that this benchmark is essentially sharp in two complementary
senses: \citet{halbey2026lower} [I] obtain matching smooth small-scale lower bounds
on Euclidean balls and ellipsoids, while \citet{grimmer2026uniform} [J] prove
large-scale lower bounds for deterministic LMO methods. In our notation, the
SC/SC regime is a special case of LDS\((2)\) + HEB\((1/2)\) (equivalently
\(r=2\)), so [I] and [J] match the \(t^{-2}\) exponent obtained by specializing
\Cref{thm:local-gap-heb-rate} in worst-case complexity, although [I] allows the
hard initialization and [J] the hard problem instance and dimension to depend
on the prescribed iteration horizon. At the low-dimensional end, the explicit quadratic over an \(\ell_p\)-ball of
\citet{zimmer2026agentic} [K] exhibits the slower exponent
\(\Theta(t^{-p/(p-1)})\) for short step and exact line search at a
zero-gradient boundary minimizer.

\paragraph{Wolfe's proper-face regime.}
When the minimizer lies in the relative interior of a proper face of a
polytope, the behavior changes again. Wolfe already observed that short step
and exact line search can become much slower in that regime and \citet{Canon_FWbound68} later refined this observation. A precise formulation
of their result is given in \citet[Theorem~2.8]{braun2025cgm}: once an iterate
\(x_{t_0}\notin F\) with
\(f(x_{t_0})<\min_{v\in\operatorname{Vert}(F)}f(v)\) has been reached, for every
\(\delta>0\), the primal gap is \(\Omega(1/(t\log^{2+\delta}t))\) for infinitely
many \(t\) [L]. Consequently, no \(O(1/(t\log^{2+\eta}t))\) guarantee, with
\(\eta>0\), can hold uniformly over this proper-face regime.
Open-loop rules behave differently here: once the active face has been
identified, the affine-invariant weak-growth framework of \citet{WPP2023} [M]
gives \(O(t^{-1/(1-\theta)})\) rates, and for strongly convex objectives this
recovers an \(O(t^{-2})\) decay.

All results in this related work section concern the vanilla Frank--Wolfe algorithm of \citep{frank1956algorithm,levitin1966constrained}. Restarted \citep{kerdreux2018restarting},
away-step/pairwise \citep{lacoste2015global}, blended variants \citep{braun2019blended,tsuji2022pairwise} or otherwise modified variants can exploit additional structural assumptions and yield further acceleration, but those methods are complementary
to the present discussion.
 }

\IfNeuripsBuild{%
\begin{table}[t]
\small
\centering
\setlength{\tabcolsep}{4pt}
\renewcommand{\arraystretch}{1.18}
\begin{tabularx}{\textwidth}{@{}>{\raggedright\arraybackslash\bfseries}p{3.3cm}>{\raggedright\arraybackslash}X@{}}
\toprule
Regime & Representative rate picture \\
\midrule
Compact-convex baseline & Generic \(O(1/t)\) convergence, with matching \(\Omega(1/t)\) lower bounds showing that no uniform improvement is possible without additional structure. \\
\addlinespace[0.2em]
Location-based acceleration & Interior minimizers yield linear short-step / line-search rates. On UC$(q)$ sets with lower-bounded gradient, short step, line search, and open loop become linear or faster polynomial. \\
\addlinespace[0.2em]
Curved sets plus growth & HEB together with UC$(q)$ yields explicit polynomial improvements. In the SC / SC case this recovers \(O(1/t^2)\), and both small-scale and large-scale lower bounds are now known. \\
\addlinespace[0.2em]
Proper-face regime & On polytopes, short step and exact line search can be nearly \(1/t\), whereas open loop can still accelerate after active-face identification. \\
\addlinespace[0.2em]
This paper: LDS & For every fixed instance satisfying LDS around its minimizer set, the primal gap is \(o(1/t)\). LDS$(q)$ plus a local HEB yields explicit polynomial tails. \\
\bottomrule
\end{tabularx}
\caption{Compressed overview of the Frank--Wolfe rate landscape most relevant here. \Cref{rem:current-sota-long,tab:known-fw-rates-long} give the full row-by-row comparison and source discussion.}
\label{tab:known-fw-rates}
\end{table}

\subsection*{Related Work and State of the Art}
\label{rem:current-sota}

The classical compact-convex story is by now well understood: vanilla
Frank--Wolfe has an \(O(1/t)\) upper bound, and matching \(\Omega(1/t)\)
lower bounds follow from simplex-type constructions and deterministic
first-order oracle arguments
\citep{frank1956algorithm,levitin1966constrained,jaggi2013revisiting,lan2013complexity}.
Faster rates traditionally require additional structure. The most classical
examples are interior minimizers and the lower-bounded-gradient regime on
curved sets, which lead to linear or accelerated polynomial rates under short
step, exact line search, and open loop
\citep{wolfe70,gm86,garber2015faster,kerdreux2021projection,WKP2022,WPP2023}.

A complementary body of work shows that curvature of the feasible region alone
does not settle the picture. Uniformly convex sets combined with growth or
error-bound assumptions yield explicit polynomial improvements
\citep{kerdreux2021projection,pena2023affine}, while recent uniform finite-horizon lower bounds show
that even strongly convex objectives over strongly convex sets can be as slow
as \(\Omega(1/t^2)\) in the worst case \citep{halbey2026lower,grimmer2026uniform}. The interior-minimizer and lower-bounded-gradient linear regimes do not cover the boundary zero-gradient case used by these hard instances. In
another direction, proper-face phenomena on polytopes can slow short step and
exact line search to nearly \(1/t\), whereas open-loop rules can still exhibit accelerated convergence once the active face has been identified \citep{wolfe70,Canon_FWbound68,WPP2023}.
The construction in \citet{zimmer2026agentic} provides a complementary low-dimensional worst-case instance for uniformly convex sets and quadratic objectives.

A comprehensive overview and comparison had to be relegated to \Cref{rem:current-sota-long}.
 }

\subsection*{Contribution} Once LDS is assumed around the objective-dependent minimizer set, the convergence result requires no additional \emph{growth or curvature} condition on $f$ beyond smoothness and convexity. For uniformly convex feasible sets, which satisfy LDS around every reference set, this qualification is automatic.

\paragraph{Local Dual Sharpness.} We introduce the \emph{Local Dual Sharpness (LDS)} condition, which subsumes uniform convexity of sets and also applies to suitable locally curved regions. For every fixed smooth convex instance satisfying LDS around its minimizer set, Frank--Wolfe converges as $o(1/t)$ without an objective growth condition. Equivalently, no such fixed instance has a persistent asymptotic $\Omega(1/t)$ primal-gap trajectory. This does not imply that $t\sup_I F_t(I) \to 0$ (with $F_t(I)$ denoting the primal gap of instance $I$ after $t$ steps) over a class whose LDS constants, objective, geometry, or burn-in vary with $I$, i.e., it is not a minimax statement. To the best of our knowledge, the uniformly-convex specialization is the first fixed-instance $o(1/t)$ result for every smooth convex objective on such a set without a Hölder error bound, strong convexity, or a lower-bounded gradient. Prior accelerated results over uniformly convex sets assumed one of these additional objective conditions.

Our local condition should be contrasted with the local scaling perspective in
\citep{dunn1979rates,kerdreux2021localglobal}, where the locality is attached
to a boundary point of $\cC$ and a specific normal direction and generally no acceleration is obtained without additional assumptions on the function. LDS is instead imposed around an entire reference set and uniformly over all LMO directions; when the LMO minimizer is nonunique, its atom-selection rule is fixed as specified below.

\paragraph{Robustness under Step-size Choices.} We prove our results for the three common step-size strategies: short steps arising from optimizing over the smoothness inequality and (exact) line search in \Cref{cor:any-smooth-ss-ls}; for open-loop step-sizes, \Cref{thm:ol-set} considers the classical open-loop \(\gamma_t=2/(t+2)\) and \Cref{rem:ol-family} treats the generalized family \(\gamma_t=\ell/(t+\ell)\). The open-loop proof is significantly more involved, since Frank--Wolfe need not be a descent method in that case.

\paragraph{Quantitative Rates.} Finally we combine LDS with a downstream local Hölder error bound to quantify our $o(1/t)$ rates, yielding new accelerated convergence regimes for Frank--Wolfe. While the obtained accelerated regimes are broader than previously known ones, this quantification comes at a cost of strengthened assumptions for $f$. The obtained quantitative bounds are tail estimates as customary (see \citep{braun2025cgm}): after a finite burn-in \(t_0\) the stated rate holds for all \(t\ge t_0\), and we do not optimize this entry time. In particular, we obtain accelerated sublinear rates of the form \(O(t^{-rq/(rq-2)})\), where \(r\) is the local Hölder error bound order and \(q\) is the LDS power. In \Cref{sec:superflat-short-step} we complement these quantitative rates by showing that $o(1/t)$ rates obtained via LDS can be slower than any polynomial rate of form $O(t^{-1-\eta})$ for any $\eta>0$.

\IfNotNeuripsBuild{%
\bigskip
}

We formally verified the principal arguments in Lean~4 (see e.g.,
\citep{moura2021lean}). A \leanverified\ badge denotes either an exact named
endpoint or an explicitly identified collection of verified ingredients
supporting the displayed statement. For \Cref{cor:any-smooth-ss-ls}, the
zero-gradient and lower-bounded-gradient branches are formalized separately
and combined by the elementary compactness argument shown in the paper. We do not formalize either the standard preprocessing step of passing to
orthonormal coordinates on $\operatorname{aff}(\cC)$ or the concrete geometry
of the illustrative sets. The verification will be made
available on GitHub, and reusable components will be proposed upstream to
mathlib. \IfNeuripsBuild{Some proofs are in the appendix due to space; the Lean
files are supplementary material.}

\section{Preliminaries}\label{sec:preliminaries}

We briefly summarize the notions that we will use throughout and we refer the reader to \citet{braun2025cgm} for an in-depth treatment. We consider the constrained convex optimization problem $\min_{x\in\cC} f(x)$, where $\cC \subseteq \RR^d$ is compact and convex and $f\colon \RR^d\to\RR$ is convex and continuously differentiable. For exposition we work in coordinates on the affine hull of \(\cC\); equivalently, the definitions below may be read relative to \(\operatorname{aff}(\cC)\), with directions and gradient norms projected to its linear span. We define $f^\star \defeq \min_{x\in\cC} f(x)$, $\cM \defeq \argmin_{x\in\cC} f(x)$, and $D \defeq \max_{x,y\in\cC}\norm{x-y}$,
where $f^\star$ is the \emph{optimal objective value}, $\cM$ is the \emph{set of minimizers}, $D$ is the \emph{diameter} of $\cC$, and $\norm{\cdot}$ is the Euclidean norm. Throughout, we assume that $f$ is $L$-smooth on $\cC$ for some \(L>0\), i.e.,
\begin{equation}\label{eq:smoothness}
  f(y)
  \le
  f(x) + \innp{\nabla f(x), y-x} + \frac{L}{2}\norm{y-x}^2
  \qquad \forall x,y\in\cC.
\end{equation}
Given an iterate $x_t\in\cC$, let
$  s_t \in \argmin_{s\in\cC} \innp{\nabla f(x_t), s}$
be a \emph{Frank--Wolfe atom} (also called \emph{Frank--Wolfe vertex} in the case of polytopes) that is returned by a \emph{Linear Minimization Oracle (LMO)} for $\cC$. If the argmin set is not a singleton, we fix a \emph{deterministic single-valued selector} $S(g)\in\argmin_{s\in\cC}\innp{g,s}$ and set $s_t=S(\nabla f(x_t))$. Note that the LDS property below is defined with respect to a given fixed selector $S$; in particular LDS for one tie-breaking rule does not necessarily imply LDS for another rule. We write $F_t \defeq f(x_t)-f^\star = f(x_t)-f(x^\star)$ as the \emph{primal gap}, where $x^\star$ is any optimal solution, $g_t \defeq \innp{\nabla f(x_t), x_t-s_t}$ as the \emph{Frank--Wolfe gap}, and $d_t \defeq \norm{x_t-s_t}$. By convexity we immediately obtain $0 \le F_t \le g_t$.

We will be concerned with the Frank--Wolfe algorithm \citep{frank1956algorithm,levitin1966constrained}, stated in \cref{alg:fw}. The key point is that it is \emph{projection-free} by forming its updates as convex combinations of the form $x_{t+1} = x_t + \gamma_t (s_t-x_t)$.

For the choice of the step-size rule in Line~\ref{eq:stepsize} of \cref{alg:fw}, we will consider the following three rules:
\begin{align}
  \gamma_t^{\mathrm{ss}}
  &\defeq
  \min\left\{\frac{g_t}{L d_t^2},1\right\}, & \textit{(short steps)}
  \label{eq:short-step}
  \\
  \gamma_t^{\mathrm{ls}}
  &\in
  \argmin_{\gamma\in[0,1]} f\bigl(x_t+\gamma(s_t-x_t)\bigr), & \textit{(exact line search)}
  \label{eq:line-search}
  \\
  \gamma_t^{\mathrm{ol}}
  &\defeq \frac{2}{t+2}. & \textit{(open-loop)}
  \label{eq:open-loop}
\end{align}
We will later also consider generalized open-loop step-sizes of the form $\gamma_t \defeq \frac{\ell}{t+\ell}$ with $\ell \in \NN$.

The following properties and results are classical; we refer the reader to \citet{braun2025cgm}.

\begin{algorithm}
  \caption{The Frank--Wolfe algorithm \citep{frank1956algorithm}}\label{alg:fw}
  \begin{algorithmic}[1]
    \Require $x_0\in\cC$ and a rule for choosing $\gamma_t \in [0,1]$
    \For{$t=0,1,2,\dots$}
      \State $s_t \gets \argmin_{s\in\cC} \innp{\nabla f(x_t), s}$
      \State \label{eq:stepsize} choose $\gamma_t$ according to
        \eqref{eq:short-step}, \eqref{eq:line-search}, or \eqref{eq:open-loop}
      \State $x_{t+1} \gets x_t + \gamma_t(s_t-x_t)$
    \EndFor
  \end{algorithmic}
\end{algorithm}

\begin{proposition}[Primal progress estimate\leanverified]
\label{prop:standard-descent}
Let $x_{t+1}=x_t+\gamma_t(s_t-x_t)$ with $\gamma_t\in[0,1]$.
Then   $F_{t+1}
  \le
  F_t - \gamma_t g_t + \frac{L}{2}\gamma_t^2 d_t^2$.
In particular:
\begin{enumerate}[leftmargin=2em]
  \item for the short-step rule and exact line search, we have $    F_{t+1}
    \le
    F_t - \frac12 \min\left\{g_t,\frac{g_t^2}{L d_t^2}\right\};
$
  \item for the open-loop rule, we have $    F_{t+1}
    \le
    \left(1-\frac{2}{t+2}\right)F_t
    + \frac{L}{2}\left(\frac{2}{t+2}\right)^2 d_t^2
$.
\end{enumerate}
\end{proposition}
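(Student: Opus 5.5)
The plan is to derive the basic inequality from the smoothness bound \eqref{eq:smoothness} along the Frank--Wolfe segment, and then specialize it to each step-size rule. Set $y=x_{t+1}=x_t+\gamma_t(s_t-x_t)$ and $x=x_t$ in \eqref{eq:smoothness}. This is admissible because $x_{t+1}\in\cC$ by convexity of $\cC$ and $\gamma_t\in[0,1]$. We obtain
\[
f(x_{t+1})\le f(x_t)-\gamma_t\innp{\nabla f(x_t),x_t-s_t}+\frac{L}{2}\gamma_t^2\norm{s_t-x_t}^2 .
\]
Subtracting $f^\star$ from both sides gives $F_{t+1}\le F_t-\gamma_t g_t+\frac L2\gamma_t^2 d_t^2$, which is the main claim.

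For item~1, minimize the quadratic $\phi(\gamma)=-\gamma g_t+\frac L2\gamma^2 d_t^2$ over $[0,1]$; if $d_t=0$ then $g_t=0$ and the bound is trivial. We split into two cases.
\begin{itemize}
\item If $g_t\le L d_t^2$, the short step is $\gamma=g_t/(Ld_t^2)$, and it gives $\phi=-g_t^2/(2Ld_t^2)$.
\item Otherwise $\gamma=1$, and then $\phi=-g_t+\frac L2 d_t^2\le -g_t/2$, using $Ld_t^2<g_t$.
\end{itemize}
In both cases $\phi(\gamma^{\mathrm{ss}})\le -\frac12\min\{g_t,g_t^2/(Ld_t^2)\}$; when $g_t\le Ld_t^2$ the minimum is indeed $g_t^2/(Ld_t^2)$, so the two cases agree with the stated bound. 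For exact line search we have $f(x_t+\gamma^{\mathrm{ls}}_t(s_t-x_t))\le f(x_t+\gamma^{\mathrm{ss}}_t(s_t-x_t))$. Hence the line-search primal gap is bounded by the short-step primal gap, which is in turn bounded by the inequality just derived.

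For item~2, plug $\gamma_t=2/(t+2)$ into the basic inequality and use $g_t\ge F_t$, which holds by convexity as noted in the preliminaries. This gives $-\gamma_t g_t\le-\gamma_t F_t$, and the bound $(1-\gamma_t)F_t+\frac L2\gamma_t^2d_t^2$ follows.

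No step is a real obstacle. The only care points are the degenerate case $d_t=0$ and checking the case split of the min in item~1.
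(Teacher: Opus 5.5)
Your proof is correct and follows the standard argument that the paper relies on; the paper gives no proof of its own and defers to the classical literature and its Lean formalization. The steps are: apply the smoothness inequality along the segment to get the basic bound, minimize the quadratic model over $[0,1]$ for short steps, compare the line-search point with the short-step point (the basic bound holds for every $\gamma\in[0,1]$), and use $g_t\ge F_t$ with $\gamma_t\ge 0$ for the open-loop rule.
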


The next proposition states the standard primal convergence guarantee for the Frank--Wolfe algorithm due to \citet{frank1956algorithm,levitin1966constrained}, from the first iterate onward. The proof is standard; see e.g., \citep{jaggi2013revisiting,pokutta2023short,braun2025cgm}.

\begin{proposition}[$O(1/t)$ primal convergence for the Frank--Wolfe algorithm\leanverified]\label{prop:standard-rate} Let $f$ be an $L$-smooth and convex function for some \(L>0\), and let $\cC$ be a non-empty compact convex set. Running \cref{alg:fw}, for each of the three step-size rules \eqref{eq:short-step}, \eqref{eq:line-search}, and \eqref{eq:open-loop}, the iterates $x_t$ satisfy
\begin{equation}
  F_t \le \frac{2LD^2}{t+2}
  \qquad \forall t\ge 1;
  \qquad \text{in particular } F_t\to 0.
\end{equation}
\end{proposition}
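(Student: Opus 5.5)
The plan is to derive everything from the one-step estimate of \Cref{prop:standard-descent} together with the bounds $d_t\le D$ and $F_t\le g_t$, and then run a separate induction for each step-size rule. All three inductions aim at the same target, $F_t\le 2LD^2/(t+2)$ for $t\ge1$.

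\emph{Open loop.} I would start here because the recursion is cleanest. Put $\gamma_t=2/(t+2)$ into item~2 of \Cref{prop:standard-descent} and use $d_t\le D$. This gives
\[
F_{t+1}\le \frac{t}{t+2}F_t+\frac{2LD^2}{(t+2)^2}.
\]
\begin{itemize}
\item At $t=0$ the factor $t/(t+2)$ vanishes, so $F_1\le LD^2/2\le 2LD^2/3$. This is the base case.
\item For the inductive step, assume $F_t\le 2LD^2/(t+2)$. Then
\[
F_{t+1}\le \frac{2LD^2(t+1)}{(t+2)^2}\le \frac{2LD^2}{t+3},
\]
where the last inequality is $(t+1)(t+3)\le (t+2)^2$.
\end{itemize}

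\emph{Short step and line search.} Exact line search makes at least as much progress as the short step along the same direction, since the short step is a feasible $\gamma\in[0,1]$. Item~1 of \Cref{prop:standard-descent} therefore applies to both rules. Using $g_t\ge F_t$ and $d_t\le D$, it reduces to
\[
F_{t+1}\le F_t-\tfrac12\min\{F_t,\,F_t^2/(LD^2)\}.
\]
\begin{itemize}
\item For the base case, evaluate the bound $F_1\le F_0-\gamma g_0+\tfrac L2\gamma^2d_0^2$ at $\gamma=1$. With $F_0\le g_0$ this gives $F_1\le LD^2/2$, which is below $2LD^2/3$.
\item Since both branches of the minimum are nonnegative, the recursion shows $F_t$ is nonincreasing. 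Hence $F_t\le LD^2/2$ for all $t\ge1$.
\item On that range $F_t\le LD^2$, so the minimum equals $F_t^2/(LD^2)$. The recursion becomes $F_{t+1}\le F_t\bigl(1-F_t/(2LD^2)\bigr)$.
\item Write $h(x)=x(1-x/c)$ with $c=2LD^2$. The map $h$ is increasing on $[0,c/2]$, and $2LD^2/(t+2)\le c/2$ for $t\ge 2$. Combining this monotonicity with the induction hypothesis gives
\[
F_{t+1}\le c\,\frac{t+1}{(t+2)^2}\le \frac{c}{t+3},
\]
by the same inequality as in the open-loop case.
\item At $t=1$ the bound $c/3=2LD^2/3$ exceeds $c/2$, so monotonicity is not available there. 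Instead I would argue directly: $F_1\le LD^2/2=c/4$, and $h$ is increasing on $[0,c/4]$, so $F_2\le h(c/4)=3c/16\le c/4=2LD^2/4$.
\end{itemize}
The final conclusion $F_t\to0$ is then immediate.

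\emph{Main obstacle.} I expect the only delicate point to be the bookkeeping at the start of the short-step and line-search induction. The $\min$ in item~1 has to be resolved, which is what the reduction to $F_t\le LD^2/2$ after one step accomplishes. The monotonicity of $h$ must be checked on the relevant interval at each stage, including the separate treatment of $t=1$. The open-loop computation is routine.
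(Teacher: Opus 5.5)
Your proof is correct. The paper gives no proof of its own and only points to the standard argument in the cited references. Your open-loop induction is exactly that argument.

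For short step and exact line search your route differs from the usual one. The textbook argument notes that the short step minimizes the model $\gamma\mapsto F_t-\gamma g_t+\frac L2\gamma^2 d_t^2$ over $[0,1]$, and that line search does at least as well in $f$ as every $\gamma\in[0,1]$. Plugging in $\gamma=2/(t+2)$ then gives both rules the open-loop recursion $F_{t+1}\le\frac{t}{t+2}F_t+\frac{2LD^2}{(t+2)^2}$. One induction, including the base case, then covers all three rules with no case bookkeeping.

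You work instead with the weakened $\min$-form of item~1. This costs you a first step to resolve the minimum and a monotonicity check on $h$. In return you get the $t$-independent recursion $F_{t+1}\le F_t-F_t^2/(2LD^2)$, which is the same reciprocal mechanism the paper later uses through $H_t=1/F_t$. Applied directly, $1/F_{t+1}\ge 1/F_t+1/(2LD^2)$ together with $F_1\le LD^2/2$ even gives the slightly sharper bound $F_t\le 2LD^2/(t+3)$.

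Two small points:
\begin{itemize}
\item \textbf{Base case.} Evaluating the model at $\gamma=1$ needs the model-minimizing property of the short step, or the pointwise dominance of line search; please state this explicitly. Item~1 in its $\min$-form only guarantees $F_1\le F_0/2$ in the large-gap branch, which does not give $F_1\le LD^2/2$ when $F_0$ is large.
\item \textbf{Arithmetic slip.} Your claim that $c/3$ exceeds $c/2$ is wrong, since $c/(t+2)\le c/2$ for every $t\ge 0$. The monotonicity step therefore already works at $t=1$. Your separate treatment of that case is correct but unnecessary.
\end{itemize}
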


We will also need the next simple observation to deal with possibly non-unique minimizers.

\begin{lemma}[Distance to the minimizer set\leanverified]\label{lem:minset-distance} Under the assumptions from \cref{prop:standard-rate}, further assume $F_t\to 0$ and let $\cM = \{x\in\cC : f(x)=f^\star\}$
be the set of minimizers. Then
\(
  \dist(x_t,\cM)\to 0
\).
\end{lemma}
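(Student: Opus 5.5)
The plan is a compactness-and-continuity argument by contradiction. We want to show that $\dist(x_t,\cM)\to 0$ whenever $F_t\to 0$.

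First we record the basic structural facts. Since $f$ is continuously differentiable, it is continuous on the compact set $\cC$. Hence the minimum $f^\star$ is attained, so $\cM=\{x\in\cC: f(x)=f^\star\}$ is nonempty. As the preimage of a closed set under a continuous map, intersected with $\cC$, it is also closed, hence compact. Consequently the function $x\mapsto \dist(x,\cM)$ is well defined and continuous (indeed $1$-Lipschitz), and it vanishes exactly on $\cM$.

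Now suppose, for contradiction, that $\dist(x_t,\cM)\not\to 0$. Then there exist $\varepsilon>0$ and a subsequence $(x_{t_k})_k$ with $\dist(x_{t_k},\cM)\ge\varepsilon$ for all $k$. All iterates lie in $\cC$, since each update is a convex combination with $\gamma_t\in[0,1]$. By compactness of $\cC$ we may pass to a further subsequence converging to some $\bar x\in\cC$. By continuity of the distance function, $\dist(\bar x,\cM)\ge\varepsilon>0$, so $\bar x\notin\cM$. On the other hand, continuity of $f$ gives $f(\bar x)=\lim_k f(x_{t_k})=\lim_k (F_{t_k}+f^\star)=f^\star$, so $\bar x\in\cM$. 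This is a contradiction.

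An equivalent quantitative phrasing avoids subsequences of subsequences. Define $\phi(\varepsilon)\defeq\min\{f(x)-f^\star : x\in\cC,\ \dist(x,\cM)\ge\varepsilon\}$. This is a minimum over a compact set, and it is strictly positive whenever that set is nonempty. Then $F_t<\phi(\varepsilon)$ forces $\dist(x_t,\cM)<\varepsilon$.

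There is no real obstacle here. The only point needing care is the closedness (hence compactness) of $\cM$ and the attainment of the minimum, both of which follow from continuity of $f$ on the compact set $\cC$. Note that convexity and smoothness are not needed beyond guaranteeing continuity; the hypothesis $F_t\to 0$ is supplied, e.g., by \Cref{prop:standard-rate}.
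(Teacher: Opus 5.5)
Your proof is correct. The paper states this lemma as a ``simple observation'' and gives no written proof, only the Lean badge, so there is no text to compare against; your compactness-and-continuity argument (closedness of $\cM$, a convergent subsequence in $\cC$, continuity of $f$ and of $\dist(\cdot,\cM)$) is the standard argument behind it, and your quantitative variant via $\phi(\varepsilon)$ also works, with the convention $\phi(\varepsilon)=+\infty$ when $\{x\in\cC:\dist(x,\cM)\ge\varepsilon\}$ is empty.
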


Finally, we will be concerned with uniformly convex sets as feasible regions.

\begin{definition}[Power-type uniform convexity]\label{def:uc}
Let $q\in\RR$ satisfy $q\ge 2$, and let $\alpha>0$.
We say that $\cC$ is \emph{$(\alpha,q)$-uniformly convex} if for every
$x,y\in\cC$, every $\lambda\in[0,1]$, and every $z\in\RR^d$ with
$\norm{z}=1$,
\begin{equation}
  \lambda x + (1-\lambda)y
  + \lambda(1-\lambda)\alpha \norm{x-y}^q z
  \in \cC.
\end{equation}
\end{definition}

The case $q=2$ is exactly the usual \emph{strong convexity} of the feasible set.
Thus every strongly convex set is uniformly convex, while the definition also
includes genuinely higher-order curved bodies such as $\ell_p$ balls for
$p>2$~\citep{kerdreux2021projection}. The next lemma turns contractions into rates.

\begin{deferredlemma}[Discrete power-law descent\leanverified]\label{lem:power-descent}
Let $(a_t)$ be a nonnegative sequence.
Assume that for some \(0<\eta\le 1\), \(0<r\le 1\), and some \(t_0\in\NN\)
with \(a_{t_0}\le 1\), it holds $a_{t+1} \le a_t - \eta a_t^{1+r}$ for all $t\ge t_0$.
Then $a_t = O\left(t^{-1/r}\right)$.
\end{deferredlemma}
\begin{deferredproof}[app:proof-power-descent]
If \(a_t=0\) for some \(t\ge t_0\), then the nonnegativity and the recursion
force the tail to stay zero, so there is nothing to prove. Otherwise, replacing
\(\eta\) by \(\bar\eta\defeq \min\{\eta,1/2\}\), we have
\(\bar\eta a_t^r\le 1/2\) for all \(t\ge t_0\).
Then
\[
  a_{t+1} \le a_t(1-\bar\eta a_t^r),
  \qquad t\ge t_0.
\]
Hence
\[
  a_{t+1}^{-r}
  \ge
  a_t^{-r}{(1-\bar\eta a_t^r)}^{-r}
  \ge
  a_t^{-r}\bigl(1+r\bar\eta a_t^r\bigr)
  =
  a_t^{-r} + r\bar\eta,
\]
where we used ${\bigl(1-u\bigr)}^{-r}\ge 1+ru$ for $0 \le u < 1$.
Therefore $a_t^{-r}$ grows at least linearly, so $a_t = O(t^{-1/r})$.
\end{deferredproof}

\section{Local Dual Sharpness}
\label{sec:local-gap}

The following definition introduces the \emph{Local Dual Sharpness (LDS)} condition that we will be working with; we write $M$ for a general set and $\cM$ for the set of minimizers.

\begin{definition}[Local dual sharpness around a reference set $M$]
\label{def:local-gap}
Let $M\subseteq\cC$ be nonempty, let $A>0$, and let $q\in\RR$ satisfy $q\ge 2$.
We say that $\cC$ together with its fixed LMO atom-selection rule satisfies
\emph{local dual sharpness around $M$ with constants $(A,q)$} if
there exists $\rho>0$ such that whenever $x\in\cC$ with $\dist(x,M)<\rho$,
then for every $g\in\RR^d$ and the atom
\(s\in\argmin_{y\in\cC}\innp{g,y}\) returned by that fixed LMO selection,
it holds:
\[
  A\norm{g}\,\norm{x-s}^q \le \innp{g,x-s}.
\]
\end{definition}

\begin{remark}[Dependence on the atom-selection rule]
\label{rem:dual-sharpness-selector}
Local dual sharpness is a property of the support map induced by the
atom-selection rule of the LMO, not just of the feasible region \(\cC\).
Indeed, for fixed \(x\) and \(g\), the Frank--Wolfe dual gap
$
  \innp{g,x-s}
  =
  \innp{g,x} - \min_{y\in\cC}\innp{g,y}
$
is the same for every minimizing atom \(s\in\argmin_{y\in\cC}\innp{g,y}\).
What can change across selections though is the displacement \(\norm{x-s}\).
The subtlety disappears when the supporting atom is unique, but if the
exposed face is flat then different selections on that face can produce
different displacements and therefore different admissible constants \((A,q)\),
or even validity for one selection and failure for another. We thus assume that the selection rule is deterministic and fixed to avoid this complication.
\end{remark}

\paragraph{Support-function form.}
Let $S(g)$ be the fixed selector and let $\sigma_{\cC}(u)\defeq\sup_{y\in\cC}\innp{u,y}$ be the support function. Then LDS is equivalently
\[
  \innp{g,x}+\sigma_{\cC}(-g)
  \ge A\norm{g}\,\norm{x-S(g)}^q
\]
near $M$. If the supporting point is unique, then $S(g)=\nabla\sigma_{\cC}(-g)$ and this is a power-type exposure inequality for the support function.

Checking whether LDS holds means certifying the LDS inequality simultaneously for every $x$ in a neighborhood of $M$ and every normalized direction $g$. This paper does not study the computational complexity of that certification problem. Useful \emph{sufficient certificates} include global uniform convexity; a uniform affine-hull interior-ball condition around $M$; the curved-patch plus residual-gap criterion in \Cref{prop:patch-local-gap}; or a direct positive lower bound on the normalized ratio over $\norm{g}=1$. If $M=\cM$ is not known, it must first be localized by problem-specific optimality information. Note, however, that the Frank--Wolfe algorithm is adaptive, i.e., it automatically benefits if LDS holds, not requiring certification.

\begin{remark}[Beyond lower-bounded gradients $\nabla f(\cC) > 0$]
\label{rem:ssls-why-minimizer}
Assume that LDS holds around the minimizer set \(\cM\) with
constants \((A,q)\).
If the gradients were bounded away from zero,
i.e., \(\norm{\nabla f(x_t)}\ge G>0\), then once \(x_t\) is close enough to \(\cM\)
the LDS inequality would give \(A G d_t^q \le g_t\),
so that \(g_t\to 0\) would force \(d_t\to 0\). This would recover the accelerated convergence of \citep{levitin1966constrained,demyanov1970approximate}. Here, however, we consider the more involved general case without such assumptions, so that \(\norm{\nabla f(x_t)}\) may itself tend to zero near \(\cM\).
\end{remark}

\begin{proposition}[Uniform convexity implies local dual sharpness\leanverified]
\label{prop:uc-implies-local-gap}
If $\cC$ is $(\alpha,q)$-uniformly convex, then for every nonempty
$M\subseteq\cC$, any LMO with any fixed atom-selection rule satisfies local dual sharpness
around $M$ with constants $(\alpha/2,q)$.
\end{proposition}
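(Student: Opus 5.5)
Since uniform convexity is a global property of $\cC$, the plan is to prove the stronger statement that the LDS inequality holds for \emph{every} $x\in\cC$ (so any $\rho>0$ works, for any nonempty $M$ and any selector), with constant $A=\alpha/2$. Fix $x\in\cC$, a direction $g\in\RR^d$, and the atom $s=S(g)\in\argmin_{y\in\cC}\innp{g,y}$ returned by the fixed selector. If $g=0$ both sides vanish, so assume $g\neq 0$. The argument uses only that $s$ is \emph{some} minimizer of $\innp{g,\cdot}$ over $\cC$, which is why the choice of selector plays no role.

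The key step is to apply \Cref{def:uc} to the pair $(x,s)$ with $\lambda=1/2$ and the unit vector $z=-g/\norm{g}$, which points in the direction that decreases $\innp{g,\cdot}$. This shows that the point
\[
  w \defeq \tfrac12 x+\tfrac12 s - \tfrac14\alpha\norm{x-s}^q\,\frac{g}{\norm{g}}
\]
lies in $\cC$. Since $s$ minimizes $\innp{g,\cdot}$ over $\cC$, we have $\innp{g,w}\ge\innp{g,s}$. Expanding this inequality gives
\[
  \tfrac12\innp{g,x-s} \ge \tfrac14\alpha\norm{g}\norm{x-s}^q .
\]
Multiplying by $2$ then yields $\innp{g,x-s}\ge \tfrac{\alpha}{2}\norm{g}\norm{x-s}^q$, which is exactly the LDS inequality with constants $(\alpha/2,q)$.

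Little here is an obstacle. The one point needing care is the choice $\lambda=1/2$: this choice maximizes $\lambda(1-\lambda)/\lambda$ only up to a factor, and it is what produces precisely the stated constant $\alpha/2$. Other values of $\lambda$ give $\alpha(1-\lambda)$, which tends to $\alpha$ as $\lambda\to0$, so the constant is not sharp but is valid. The other point is the affine-hull convention. If $\cC$ is not full-dimensional, \Cref{def:uc} with arbitrary unit $z\in\RR^d$ forces $\cC$ to be full-dimensional or a point; under the paper's convention of working in coordinates on $\operatorname{aff}(\cC)$, we take $g$ projected onto the linear span, and the argument above goes through unchanged.
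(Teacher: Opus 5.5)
Your proposal is correct and is essentially the paper's proof. \Cref{lem:uc-gap} establishes the global inequality $\frac{\alpha}{2}\norm{g}\,\norm{x-s}^q\le\innp{g,x-s}$ for every $x\in\cC$ and every minimizing atom $s$, using exactly your choice $\lambda=1/2$ and $z=-g/\norm{g}$ together with optimality of $s$; the proposition follows because the inequality then holds for any $\rho>0$, independently of $M$ and of the selector (and your aside that letting $\lambda\to 0$ would even give the constant $\alpha$ is also correct, though not needed).
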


\begin{proof}
By \Cref{lem:uc-gap},
$\frac{\alpha}{2}\norm{g}\,\norm{x-s}^q\le\innp{g,x-s}$
holds for every $x\in\cC$, every $g\in\RR^d$, and every selected minimizing
atom $s$. Thus the inequality holds globally for any $\rho>0$ in
\Cref{def:local-gap}.
\end{proof}

The local dual sharpness condition is more general than (global) uniform convexity; we present two examples in \cref{sec:lds-vs-uc} in the appendix. Its validity around $\cM$, however, depends on the location of that objective-dependent set, e.g., in \cref{fig:stadium-local-gap-regimes} it holds on the rounded caps but not on a flat side. Strict convexity alone also does not imply LDS; see \cref{rem:strict-not-enough}. Our setup differs from the local uniform convexity and local scaling perspective of \citep{dunn1979rates,kerdreux2021localglobal}, where locality is expressed at one boundary point and a singled-out normal direction. Here the condition is imposed on the fixed LMO support map around an entire reference set and over all LMO directions.

\subsection{Short Steps and Exact Line Search}
\label{sec:ss-and-ls}

Here and in the following, we often have to pay special attention to the potential $\abs{\cM} > 1$ case. We make this explicit by splitting theorems.

\begin{theorem}[LDS + short steps or exact line search / conditional form\leanverified]
\label{thm:conditional-local-gap}
Let $M\subseteq\cC$ be nonempty.
Assume that the Frank--Wolfe iterates are generated either by the global
short-step rule or by exact line search.
Suppose that $\cC$ together with its LMO satisfies local dual sharpness
around $M$ with some constants $(A,q)$.
Let $x^\star\in\cC$, and let $(p_t)\subseteq M$ satisfy
$f(p_t)=f(x^\star)$ and $\nabla f(p_t)=0$ for all $t$.
If $\delta_t \defeq \norm{x_t-p_t}\to 0$,
then
\[
  (t+2)\bigl(f(x_t)-f(x^\star)\bigr)\to 0.
\]
\end{theorem}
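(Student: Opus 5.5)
The plan is to turn LDS into a quadratic contraction $F_{t+1}\le F_t-c_tF_t^2$ whose coefficient $c_t$ tends to $+\infty$. A telescoping argument on $1/F_t$ then gives superlinear growth of $1/F_t$, which is exactly $(t+2)F_t\to0$.

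\emph{Step 1: local estimates from the reference points.} Since $\nabla f(p_t)=0$ and $f(p_t)=f(x^\star)$, smoothness gives $\norm{\nabla f(x_t)}\le L\delta_t$. Convexity gives
\[
F_t\le \innp{\nabla f(x_t),x_t-p_t}\le G_t\,\delta_t,
\qquad\text{where } G_t\defeq\norm{\nabla f(x_t)}.
\]
Thus $G_t\ge F_t/\delta_t$ whenever $\delta_t>0$. Since $p_t\in M$, we have $\dist(x_t,M)\le\delta_t<\rho$ for all $t\ge t_0$. For such $t$, LDS with $g=\nabla f(x_t)$ and $s=s_t$ yields $A\,G_t\,d_t^q\le g_t$.

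\emph{Step 2: degenerate cases.} If $F_t=0$ for some $t$, monotonicity of short steps and line search (\Cref{prop:standard-descent}) keeps $F$ at zero, and there is nothing to prove. The same holds if $G_t=0$ or $d_t=0$, since then $g_t=0$ and hence $F_t=0$. So we may assume $F_t,G_t,d_t,\delta_t>0$ for all $t\ge t_0$.

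\emph{Step 3: the contraction.} By \Cref{prop:standard-descent} there are two branches.

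In the branch $F_{t+1}\le F_t-\tfrac12 g_t$, we use $g_t\ge F_t$ to get $F_{t+1}\le F_t/2$. This gives $1/F_{t+1}\ge 1/F_t+1/F_t$.

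In the other branch, $F_{t+1}\le F_t-g_t^2/(2Ld_t^2)$. Here we use $d_t\le (g_t/(AG_t))^{1/q}$, then $g_t\ge F_t$ and $G_t\ge F_t/\delta_t$:
\[
\frac{g_t^2}{d_t^2}\ \ge\ g_t^{2-2/q}(AG_t)^{2/q}\ \ge\ A^{2/q}F_t^{2-2/q}\,F_t^{2/q}\,\delta_t^{-2/q}=A^{2/q}F_t^2\,\delta_t^{-2/q}.
\]
Hence $F_{t+1}\le F_t-c_tF_t^2$ with $c_t\defeq A^{2/q}\delta_t^{-2/q}/(2L)\to\infty$. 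This is the key point where $q$ only enters through a power of $\delta_t\to0$. Using $(1-u)^{-1}\ge1+u$ as in \Cref{lem:power-descent}, this gives $1/F_{t+1}\ge 1/F_t+c_t$.

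\emph{Step 4: combining the branches.} In both branches $1/F_{t+1}-1/F_t\ge \min\{c_t,1/F_t\}$. Both quantities tend to $+\infty$: $c_t$ because $\delta_t\to0$, and $1/F_t$ because $F_t\to0$ by \Cref{prop:standard-rate}. Averaging these increments (a Cesàro argument) gives $1/(tF_t)\to\infty$, i.e.\ $(t+2)F_t\to0$.

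I expect the main obstacle to be Step 3. There one must avoid using a lower bound on $\norm{\nabla f}$, which may vanish near $M$. The resolution is to trade gradient size against primal gap via $G_t\ge F_t/\delta_t$, so that the vanishing gradient is exactly compensated and the extra factor $\delta_t^{-2/q}$ survives as the diverging coefficient. A secondary point is the bookkeeping ensuring the LDS neighborhood is entered, which follows because $\dist(x_t,M)\le\delta_t$.
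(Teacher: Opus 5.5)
Your proof is correct and follows the paper's argument. It uses the same reciprocal $1/F_t$, the same split of the descent estimate from \Cref{prop:standard-descent}, the same increment $\frac{A^{2/q}}{2L}\delta_t^{-2/q}$ in the small-step branch, and the same Ces\`aro conclusion. In that branch you substitute the LDS bound on $d_t$ directly, where the paper takes a weighted geometric mean of two bounds; this is the same algebra.

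The only real deviation is in the large-step branch: you invoke $1/F_t\to\infty$ from \Cref{prop:standard-rate} (valid, since $\nabla f(p_t)=0$ forces $f(x^\star)=f^\star$), whereas the paper uses $F_t\le\frac L2\delta_t^2$. Separately, your bound $\norm{\nabla f(x_t)}\le L\delta_t$ is never used and does not obviously follow from smoothness assumed only on $\cC$, so you can simply drop it.
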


\begin{proof}
Define $H_t \defeq \frac{1}{F_t}$, where $F_t$ is the primal gap. If $F_t=0$ for some $t$, then the claim is immediate.
So we assume $F_t>0$ for all $t$. Let $\rho>0$ be given by \Cref{def:local-gap}.
Since
$\dist(x_t,M)\le \norm{x_t-p_t}=\delta_t \to 0$,
there exists $t_0$ such that \(\dist(x_t,M)<\rho\) for all \(t\ge t_0\).
Hence for every \(t\ge t_0\), local dual sharpness yields $A\norm{\nabla f(x_t)}\,d_t^q \le g_t$.

Convexity at the zero-gradient minimizer $p_t$ gives
$
  F_t
  =
  f(x_t)-f(p_t)
  \le
  \innp{\nabla f(x_t),x_t-p_t}
  \le
  \norm{\nabla f(x_t)}\,\delta_t
$. Moreover, since \(\nabla f(p_t)=0\) and \(f\) is \(L\)-smooth, we have
$F_t \le \frac{L}{2}\delta_t^2$. Combining the first estimate with the local dual sharpness inequality gives
\begin{equation}
  \label{eq:lds-ss-ls}
  A\frac{F_t}{\delta_t}d_t^q \le g_t
  \qquad \forall t\ge t_0.
\end{equation}

By \Cref{prop:standard-descent}, the short-step and exact-line-search rules
both satisfy
\[
  F_{t+1}
  \leq
  F_t-\frac12\min\left\{g_t,\frac{g_t^2}{L d_t^2}\right\}.
\]
Therefore
\[
  H_{t+1}-H_t
  =
  \frac{F_t-F_{t+1}}{F_t\,F_{t+1}}
  \ge
  \frac{F_t-F_{t+1}}{F_t^2},
\]
where the latter inequality holds because $F_t$ is monotone decreasing. We split into two cases.

\noindent
\emph{Case 1: $g_t \ge L d_t^2$.}
Then $F_t-F_{t+1} \ge \frac12 g_t \ge \frac12 F_t$,
so  $H_{t+1}-H_t \ge \frac{1}{2F_t} \ge \frac{1}{L\delta_t^2}$ follows.

\noindent
\emph{Case 2: $g_t \le L d_t^2$.}
Then $F_t-F_{t+1}
  \ge
  \frac{g_t^2}{2L d_t^2}$. Using \(g_t\ge F_t\) and \eqref{eq:lds-ss-ls} we obtain the two inequalities
\begin{equation}
  \label{eq:local-gap-case-two}
  H_{t+1}-H_t
  \ge
  \frac{g_t^2}{2L d_t^2 F_t^2}
  \ge
  \frac{1}{2L d_t^2},
  \qquad \text{and} \qquad
  H_{t+1}-H_t
  \ge
  \frac{A^2 d_t^{2q-2}}{2L\delta_t^2}.
\end{equation}
Let \(\theta = (q-1)/q\) and since \(\max\{u,v\}\ge u^\theta v^{1-\theta}\) for all \(u,v\ge 0\), \eqref{eq:local-gap-case-two} implies
\[
  H_{t+1}-H_t
  \ge
  \left(\frac{1}{2L d_t^2}\right)^{(q-1)/q}
  \left(\frac{A^2 d_t^{2q-2}}{2L\delta_t^2}\right)^{1/q}
  =
  \frac{A^{2/q}}{2L}\,\delta_t^{-2/q}.
\]

After increasing \(t_0\) if needed, we may also assume \(\delta_t<1\) for all
\(t\ge t_0\). Since \(q\ge 2\), this gives \(\delta_t^{-2}\ge \delta_t^{-2/q}\),
so Case 1 also yields $H_{t+1}-H_t \ge \frac{1}{L}\,\delta_t^{-2/q}$.
Thus both cases imply $H_{t+1}-H_t \ge \min\left\{\frac{1}{L},\frac{A^{2/q}}{2L}\right\} \delta_t^{-2/q}$ for all $t \geq t_0$. Since \(\delta_t\to 0\), the reciprocal increments \(H_{k+1}-H_k\) diverge to
\(+\infty\) along the tail. Their Ces\`aro averages therefore diverge as well,
i.e.,
\[
  \frac{H_t}{t} - \frac{H_0}{t}
  =
  \frac{1}{t} \sum_{k = 0}^{t-1} \left( H_{k+1} - H_k \right) \to +\infty.
\]
Thus \(\frac{H_t}{t}\to \infty\), equivalently \(tF_t\to 0\). In particular
\(H_t\to\infty\), hence \(F_t\to 0\). Therefore
\((t+2)F_t = tF_t + 2F_t \to 0\).
\end{proof}

\begin{theorem}[LDS + short steps or line search with a zero-gradient minimizer\leanverified]
\label{thm:local-gap-zgm}
Let $\cM$ be the minimizer set.
Assume that the Frank--Wolfe iterates are generated either by the global
short-step rule or by exact line search.
Suppose that $\cC$ together with its LMO satisfies local dual sharpness
around $\cM$ with some constants $(A,q)$.
If $\cM$ contains a point $x^\star$ with $\nabla f(x^\star)=0$, then
$f(x_t)-f^\star = o(1/t)$.
\end{theorem}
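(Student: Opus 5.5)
The plan is to reduce \Cref{thm:local-gap-zgm} to \Cref{thm:conditional-local-gap} with reference set $M=\cM$. The only thing to build is a sequence $(p_t)\subseteq\cM$ with $f(p_t)=f^\star$, $\nabla f(p_t)=0$, and $\norm{x_t-p_t}\to 0$. Once it exists, \Cref{thm:conditional-local-gap} gives $(t+2)F_t\to 0$, which is $f(x_t)-f^\star=o(1/t)$.

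\textbf{Step 1: every minimizer has zero gradient.} Since $\nabla f(x^\star)=0$ at some $x^\star\in\cM$, all of $\cM$ shares this property. Take any $y\in\cM$. The points $x^\star$ and $y$ are both minimizers of $f$ over $\cC$, and $x^\star$ is also a global unconstrained minimizer of $f$ on $\RR^d$, because $f$ is convex with a vanishing gradient there. Hence $f(y)=f^\star=\min_{\RR^d}f$, so $y$ is a global unconstrained minimizer of the convex differentiable $f$, and therefore $\nabla f(y)=0$.

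\textbf{Step 2: construct $p_t$.} By \Cref{prop:standard-rate}, $F_t\to 0$ under either step rule, and then \Cref{lem:minset-distance} gives $\dist(x_t,\cM)\to 0$. The set $\cM$ is compact: it is a closed subset of the compact set $\cC$, by continuity of $f$. So we can choose $p_t\in\argmin_{y\in\cM}\norm{x_t-y}$. Then $\delta_t=\norm{x_t-p_t}=\dist(x_t,\cM)\to 0$, and by Step 1 we have $f(p_t)=f^\star$ and $\nabla f(p_t)=0$.

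\textbf{Step 3: invoke \Cref{thm:conditional-local-gap}.} Apply it with $M=\cM$, which is nonempty, and with the LDS constants $(A,q)$ from the hypothesis. This yields $(t+2)F_t\to 0$ and finishes the proof.

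The main obstacle is Step 1. The hypothesis supplies only one zero-gradient minimizer, while \Cref{thm:conditional-local-gap} needs zero gradients along a sequence that tracks $x_t$. The observation that constrained minimizers coincide with global unconstrained minimizers once one of them has zero gradient resolves this. The other ingredients are routine, namely compactness of $\cM$ and convergence of $F_t$.
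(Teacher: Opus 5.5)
Your proposal is correct and follows the paper's proof essentially step for step. Like the paper, you get $\dist(x_t,\cM)\to 0$ from \Cref{prop:standard-rate} and \Cref{lem:minset-distance}, take nearest points $p_t\in\cM$, note that one zero-gradient minimizer makes every point of $\cM$ a global unconstrained minimizer with $\nabla f(p_t)=0$, and invoke \Cref{thm:conditional-local-gap} with $M=\cM$. Your additional details (compactness of $\cM$ guaranteeing that $p_t$ exists, and the explicit Fermat-rule argument in Step 1) just spell out what the paper leaves implicit.
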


\begin{proof}
By \Cref{prop:standard-rate}, $F_t\to 0$.
Hence \Cref{lem:minset-distance} gives $\dist(x_t,\cM)\to 0$. Choose $p_t\in\cM$ with
$\norm{x_t-p_t}=\dist(x_t,\cM)$.
Then $\delta_t\defeq \norm{x_t-p_t}\to 0$.
Since $x^\star$ is a zero-gradient point of a differentiable convex function,
it is a global minimizer on the ambient space.
Therefore every point of \(\cM\) is also a zero-gradient minimizer, so
$\nabla f(p_t)=0$ for all $t$. Applying \Cref{thm:conditional-local-gap} yields
$(t+2)\bigl(f(x_t)-f^\star\bigr)\to 0$.
\end{proof}

The proof of the next proposition is similar to \citet{demyanov1970approximate,levitin1966constrained,garber2015faster}; see also \citet{braun2025cgm}.
\begin{deferredproposition}[LDS + short steps or exact line search when $\nabla f(\cC) > 0$\leanverified]
\label{prop:local-gap-polyak}
Let $\cM$ be the minimizer set.
Assume that the Frank--Wolfe iterates are generated either by the global
short-step rule or by exact line search.
Suppose that $\cC$ together with its LMO satisfies local dual sharpness
around $\cM$ with some constants $(A,q)$ and that $G \defeq \min_{x\in\cC}\norm{\nabla f(x)} > 0$. Then:
\begin{enumerate}[leftmargin=2em]
  \item if $q=2$, then with $\kappa \defeq \frac12 \min\left\{1,\frac{AG}{L}\right\}$,
  there exists $t_0\in\NN$ such that $F_t\le (1-\kappa)^{t-t_0} F_{t_0}$ for all $t \ge t_0$.
  \item if $q>2$, then there exist $t_0\in\NN$ and $K>0$ such that $F_t\le \frac{K}{(t-t_0+1)^{q/(q-2)}}$ for all $t \ge t_0$.
\end{enumerate}
\end{deferredproposition}
\begin{deferredproof}[proof:prop:local-gap-polyak]
By \Cref{prop:standard-rate}, $F_t\to 0$.
Hence \Cref{lem:minset-distance} gives $\dist(x_t,\cM)\to 0$.
Let $\rho>0$ be given by \Cref{def:local-gap}, and choose $t_0$ so that
\(\dist(x_t,\cM)<\rho\) for all \(t\ge t_0\). Then
\(A\norm{\nabla f(x_t)}\,d_t^q \le g_t\) and hence
\[
  AG\,d_t^q \le g_t \qquad \forall t\ge t_0.
\]
\emph{Case $q=2$.} Then \(d_t^2 \le g_t/(AG)\), so
\(\frac{g_t^2}{L d_t^2}\ge \frac{AG}{L}g_t\). Therefore
\Cref{prop:standard-descent} and \(F_t\le g_t\) give
\[
  F_{t+1}\le F_t-\kappa g_t \le (1-\kappa)F_t
  \qquad \forall t\ge t_0,
  \qquad
  \kappa \defeq \frac12 \min\left\{1,\frac{AG}{L}\right\},
\]
and iterating yields
\[
  F_t\le (1-\kappa)^{t-t_0} F_{t_0}
  \qquad \forall t\ge t_0.
\]
\emph{Case $q>2$.} Again \Cref{prop:standard-descent} and \(F_t\le g_t\) yield
\[
  F_{t+1}
  \le
  F_t - \frac12 \min\left\{F_t,\frac{(AG)^{2/q}}{L} F_t^{2-2/q}\right\}
  \qquad \forall t\ge t_0.
\]
By \Cref{prop:standard-rate}, after increasing \(t_0\) if needed we may
assume \(F_t\le 1\) for all \(t\ge t_0\); since \(q>2\), this implies
\(F_t \ge F_t^{2-2/q}\). Hence
\[
  F_{t+1}\le F_t-\eta F_t^{2-2/q}
  \qquad \forall t\ge t_0,
  \qquad
  \eta \defeq \frac12 \min\left\{1,\frac{(AG)^{2/q}}{L}\right\},
\]
and \Cref{lem:power-descent} applied to the shifted tail gives a constant
\(K>0\) such that
\[
  F_t\le \frac{K}{(t-t_0+1)^{q/(q-2)}}
  \qquad \forall t\ge t_0.
\]
In either case the tail is asymptotically strictly faster than \(1/t\), hence
\(f(x_t)-f^\star=o(1/t)\).
\end{deferredproof}

Combining \Cref{thm:local-gap-zgm,prop:local-gap-polyak} gives:

\begin{corollary}[Beyond $1/t$ under local dual sharpness for short step and exact line search\leanverified]
\label{cor:any-smooth-ss-ls}
Let $\cC$ be compact and convex, and let $f\colon\RR^d\to\RR$ be smooth and
convex. Suppose that $\cC$ together with its LMO satisfies local dual sharpness
around the minimizer set $\cM$ with some constants $(A,q)$.
Then both the global short-step rule and exact line search satisfy
\[
  f(x_t)-f^\star = o(1/t).
\]
\end{corollary}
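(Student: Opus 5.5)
The plan is to combine \Cref{thm:local-gap-zgm} and \Cref{prop:local-gap-polyak}. The only gap to bridge is that the corollary assumes neither a zero-gradient minimizer nor a global lower bound $\min_{x\in\cC}\norm{\nabla f(x)}>0$. I will show that one of these two alternatives always holds, possibly after restricting the lower bound to a neighborhood of $\cM$ that the iterates eventually enter.

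\emph{Step 1 (dichotomy via convexity).} If some $x^\star\in\cM$ has $\nabla f(x^\star)=0$, then \Cref{thm:local-gap-zgm} applies directly and gives $F_t=o(1/t)$.

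Otherwise $\nabla f(x)\neq 0$ for every $x\in\cM$. The set $\cM$ is compact, since it is a closed subset of $\cC$ and $f$ is continuous. The function $x\mapsto\norm{\nabla f(x)}$ is continuous, since $f$ is $C^1$. Hence $G_0\defeq\min_{x\in\cM}\norm{\nabla f(x)}>0$.

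There is one subtlety. In a Frank--Wolfe setting, a zero gradient at a point of $\cC$ outside $\cM$ is impossible: such a point would be an unconstrained global minimizer lying in $\cC$, and hence would belong to $\cM$. It follows that $\nabla f\neq 0$ on all of $\cC$. Compactness of $\cC$ then gives $G\defeq\min_{x\in\cC}\norm{\nabla f(x)}>0$, which is exactly the hypothesis of \Cref{prop:local-gap-polyak}.

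\emph{Step 2 (applying the proposition).} With $G>0$, \Cref{prop:local-gap-polyak} gives either geometric decay (if $q=2$) or decay of order $O(t^{-q/(q-2)})$ (if $q>2$). Both are $o(1/t)$, because $q/(q-2)>1$. This completes the second branch.

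As a more robust alternative, which is also what the Lean-described "compactness argument" suggests, I would avoid the global bound altogether. By continuity and compactness there is a $\rho'>0$ such that $\norm{\nabla f(x)}\ge G_0/2$ whenever $\dist(x,\cM)<\rho'$. By \Cref{prop:standard-rate} and \Cref{lem:minset-distance}, the iterates eventually satisfy $\dist(x_t,\cM)<\min\{\rho,\rho'\}$. From that point on, the proof of \Cref{prop:local-gap-polyak} goes through verbatim with $G_0/2$ in place of $G$, since it only uses the bound along the tail.

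\emph{Main obstacle.} I do not expect any analytic difficulty. The only point needing care is to justify that the two branches are exhaustive, which is the convexity observation that a zero gradient anywhere forces a zero gradient on all of $\cM$. The second route also needs the uniform-neighborhood step, which uses uniform continuity of $\nabla f$ on the compact set $\cC$.
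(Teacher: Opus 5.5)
Your proposal is correct and matches the paper's proof. The paper splits on whether $\nabla f$ vanishes somewhere on $\cC$: by convexity such a point lies in $\cM$, so \Cref{thm:local-gap-zgm} applies; otherwise continuity of $\nabla f$ and compactness of $\cC$ give $\min_{x\in\cC}\norm{\nabla f(x)}>0$, so \Cref{prop:local-gap-polyak} applies. Your localized variant with $G_0/2$ near $\cM$ is also valid but unnecessary, since your convexity observation already makes the global lower bound available.
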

\begin{proof}
If for some $x\in\cC$ we have $\nabla f(x)=0$, then $x$ is a global
minimizer because of convexity, so that $x\in\cM$ and \Cref{thm:local-gap-zgm} applies. Otherwise
$\nabla f$ has no zero on the compact set $\cC$. Continuity of the gradient
then yields $\min_{x\in\cC}\norm{\nabla f(x)}>0$, and
\Cref{prop:local-gap-polyak} applies.
\end{proof}

\subsection{Open-Loop Step-Sizes}
\label{sec:open-loop}

We will now consider the open-loop step-size case. For the sake of exposition
we prove the classical \(\frac{2}{t+2}\) variant in the main text. For fixed \(\ell\ge2\), the same qualitative argument
extends to \(\frac{\ell}{t+\ell}\); see
\Cref{rem:ol-family} in the appendix. In contrast to the previous arguments for short steps and line
search in \Cref{sec:ss-and-ls}, we do not have one-step contractions and hence
the argument is significantly more involved; we will also not have to
distinguish cases according to whether \(\nabla f(\cC) > 0\).

\begin{deferredtheorem}[LDS + classical open-loop / conditional form\leanverified]
\label{thm:conditional-ol}
Let $M\subseteq\cC$ be nonempty.
Let $(x_t)$ be the Frank--Wolfe iterates generated by the classical open-loop
rule $\gamma_t=2/(t+2)$.
Suppose that $\cC$ together with its LMO satisfies local dual sharpness
around $M$ with some constants $(A,q)$, where \(q\ge2\).
Let \(x^\star\in\cM\), and let \((p_t)\subseteq M\) satisfy
\(f(p_t)=f(x^\star)\) for all \(t\).
If $\delta_t \defeq \norm{x_t-p_t}\to 0$, then
\[
  (t+2)\bigl(f(x_t)-f(x^\star)\bigr)\to 0.
\]
\end{deferredtheorem}

\begin{deferredproof}[proof:thm:conditional-ol]
For convenience define $h_t \defeq (t+2)F_t$, and observe $0 \le h_t \le 2LD^2$ for all $t\geq 1$ by \Cref{prop:standard-rate}; below we always increase burn-in indices so that \(t_0 \ge 1\). Let $\rho>0$ be given by \Cref{def:local-gap} and for every $t$ with $h_t>0$, set $r_t \defeq \norm{p_t-s_t}$.
Since \(p_t\in M\), one has \(\dist(p_t,M)=0<\rho\).
Hence convexity together with local dual sharpness at \(p_t\) gives
$F_t \le \innp{\nabla f(x_t),x_t-p_t}$ and $A\norm{\nabla f(x_t)}\,r_t^q \leq \innp{\nabla f(x_t),p_t-s_t}$.
Adding up the two inequalities we obtain
\[
  g_t
  =
  \innp{\nabla f(x_t),x_t-s_t}
  \ge
  F_t + A\norm{\nabla f(x_t)}\,r_t^q.
\]
If $F_t>0$ and hence $\delta_t>0$, using  $F_t \le \norm{\nabla f(x_t)}\,\delta_t$
we obtain
\[
  g_t
  \ge
  F_t + A\frac{F_t}{\delta_t}r_t^q.
\]
We consider two cases. \emph{Case 1:} If $q=2$, then
$
  g_t \ge F_t + A\frac{F_t}{\delta_t}r_t^2.
$
Fix $\varepsilon>0$ and define
\[
  a_t\defeq\frac{t(t+3)}{(t+2)^2},
  \qquad
  c_t\defeq\frac{t+3}{(t+2)^2},
  \qquad
  C_{\mathrm s}\defeq\frac{4L}{A}.
\]
Because $\delta_t\to0$ and $c_t\to0$, there exists $t_0\ge1$ such that for
all $t\ge t_0$,
\begin{equation}\label{eq:ol-q2-small}
  C_{\mathrm s}\delta_t<\frac{\varepsilon}{2},
  \qquad
  4L\delta_t^2<\frac{\varepsilon}{2},
  \qquad
  2LD^2c_t<\frac{\varepsilon}{2}.
\end{equation}
Also $d_t\le\delta_t+r_t$, hence $d_t^2\le2\delta_t^2+2r_t^2$.
Substituting the bounds for $g_t$ and $d_t^2$ into
\Cref{prop:standard-descent} yields
\begin{equation}\label{eq:ref-progres-base}
  F_{t+1}
  \le
  (1-\gamma_t)F_t+L\gamma_t^2\delta_t^2
  +\left(L\gamma_t^2-A\gamma_t\frac{F_t}{\delta_t}\right)r_t^2.
\end{equation}
The sign of the coefficient of $r_t^2$ yields two cases. If
$L\gamma_t^2>A\gamma_t F_t/\delta_t$, then
\[
  F_t<\frac{L}{A}\gamma_t\delta_t
  =\frac{2L}{A}\frac{\delta_t}{t+2},
  \qquad\text{so that}\qquad
  h_t<\frac{2L}{A}\delta_t
  \le C_{\mathrm s}\delta_t<\frac{\varepsilon}{2}.
\]
We call this the small branch. Otherwise the coefficient of $r_t^2$ is
nonpositive; discarding that term from \eqref{eq:ref-progres-base} and
multiplying by $t+3$ gives the refined recurrence
\begin{equation}\label{eq:ol-q2-refined}
  h_{t+1}\le a_t h_t+ 4L\delta_t^2 c_t.
\end{equation}
Moreover, independently the standard progress estimate gives
\begin{equation}\label{eq:ol-q2-standard}
  h_{t+1}\le a_t h_t+2LD^2c_t.
\end{equation}
Compared with \eqref{eq:ol-q2-standard}, the refined recurrence replaces the fixed diameter error $D^2$ by the vanishing distance error $\delta_t^2$ and is therefore asymptotically sharper on the tail.

We first prove forward invariance of the sublevel set
$\{h_t\le\varepsilon\}$ for $t\ge t_0$. If the small branch holds, then
\eqref{eq:ol-q2-standard}, $a_t\le1$, and
\eqref{eq:ol-q2-small} give $h_{t+1}<\varepsilon$. Otherwise
\eqref{eq:ol-q2-refined} and $h_t\le\varepsilon$ give
\[
  h_{t+1}
  \le a_t\varepsilon+c_t\frac{\varepsilon}{2}
  \le\varepsilon,
\]
because $a_t+c_t/2\le1$. This establishes the forward invariance.

Next suppose $h_t>\varepsilon$. The small branch is impossible, and hence
\eqref{eq:ol-q2-refined} gives
Since $1-a_t=(t+4)/(t+2)^2$ and $h_t>\varepsilon$, we obtain
\begin{align*}
  h_t-h_{t+1}
  &\ge (1-a_t)h_t-c_t\frac{\varepsilon}{2}\\
  &\ge \frac{\varepsilon(t+5)}{2(t+2)^2}\\
  &\ge \frac{\varepsilon}{2(t+2)}.
\end{align*}

If some arbitrarily late $h_n$ were larger than $\varepsilon$, forward
invariance would force $h_t>\varepsilon$ for every $t_0\le t\le n$.
Summing the last inequality from $t_0$ to $n-1$ would then give
\[
  h_n
  \le h_{t_0}-\frac{\varepsilon}{2}
      \sum_{t=t_0}^{n-1}\frac{1}{t+2},
\]
which is negative for all sufficiently large $n$, contradicting $h_n\ge0$.
Therefore $h_t\le\varepsilon$ eventually. Since $\varepsilon>0$ was
arbitrary, $h_t\to0$.
This proves the theorem when $q=2$.

\emph{Case 2:} We therefore assume $q>2$ below. Fix $\varepsilon>0$ and define
$\beta \defeq \frac{2}{q-2}$, $C_\varepsilon \defeq \frac{16L}{qA\varepsilon}$, and
  $R_\varepsilon \defeq \frac{q-2}{q} L C_\varepsilon^\beta$.
Because $\delta_t\to 0$, both $4L\delta_t^2$ and $4R_\varepsilon \delta_t^\beta$
tend to zero.
Hence there exists $t_0$ such that for all $t\ge t_0$,
\begin{equation}\label{eq:coeff-small}
  4L\delta_t^2 + 4R_\varepsilon\delta_t^\beta < \frac{\varepsilon}{2},
  \qquad \text{ and } \qquad
  2LD^2 \frac{t+3}{(t+2)^2} < \frac{\varepsilon}{2}.
\end{equation}

The key point is a dichotomy that we derive in the following. To this end, observe that in the descent estimate the only unknown quantity on the right-hand side is $r_t$. We isolate its contribution as $\Phi_t(r)
  \defeq
  L\gamma_t^2 r^2 - A\gamma_t \frac{F_t}{\delta_t} r^q$, with $r \geq 0$.
Using
\(
  d_t^2 \le 2\delta_t^2 + 2r_t^2
\)
and
\(
  g_t \ge F_t + A(F_t/\delta_t)r_t^q
\)
in the primal progress estimate from \Cref{prop:standard-descent} gives,
similar to before, $F_{t+1}
  \leq
  (1-\gamma_t)F_t + L\gamma_t^2\delta_t^2 + \Phi_t(r_t)$, except we cannot factor out the $r_t$ as before due to mismatching exponents.
However, it is enough to obtain a uniform upper bound on $\Phi_t(r)$ for $r\ge 0$.
Whenever $F_t>0$, define $\lambda_t \defeq \frac{4L\gamma_t\delta_t}{qA F_t}$, then
\[
  \Phi_t(r)
  =
  L\gamma_t^2\left(r^2-\frac{4}{q\lambda_t}r^q\right).
\]
Applying Young's inequality to
\(
  r^2=\lambda_t^{-2/q}r^2\cdot \lambda_t^{2/q}
\)
with conjugate exponents \(q/2\) and \(q/(q-2)\) yields
$
  r^2
  \le
  \frac{2}{q\lambda_t}r^q + \frac{q-2}{q}\lambda_t^\beta
$ for $r \geq 0$.
Hence
\[
  \Phi_t(r)
  \le
  \frac{q-2}{q}L\gamma_t^2\lambda_t^\beta.
\]
This yields the desired dichotomy. Either $\lambda_t > C_\varepsilon \delta_t$,
in which case $
F_t
  =
  \frac{4L\gamma_t\delta_t}{qA\lambda_t}
  <
  \frac{4L\gamma_t}{qA C_\varepsilon}
  =
  \frac{\varepsilon}{2(t+2)}
$, which is equivalent to $h_t < \frac{\varepsilon}{2}$
or else \(\lambda_t\le C_\varepsilon \delta_t\), so
\[
  \Phi_t(r_t)
  \le
  \frac{q-2}{q}L\gamma_t^2 C_\varepsilon^\beta \delta_t^\beta
  =
  R_\varepsilon \gamma_t^2\delta_t^\beta.
\]
Substituting this into the previous descent estimate gives $F_{t+1}
  \le
  (1-\gamma_t)F_t + L\gamma_t^2\delta_t^2 + R_\varepsilon \gamma_t^2\delta_t^\beta
$ and multiplying by $t+3$ gives
\begin{equation}\label{eq:ol-refined-recursion}
  h_{t+1}
  \le
  a_t h_t + c_t\,\operatorname{coeff}_t,
\end{equation}
where for convenience we introduce $a_t \defeq \frac{t(t+3)}{(t+2)^2}$, $c_t \defeq \frac{t+3}{(t+2)^2}$, and $\operatorname{coeff}_t \defeq 4L\delta_t^2 + 4R_\varepsilon\delta_t^\beta$.
Independently, \Cref{prop:standard-descent} and $d_t\le D$ give the standard bound
$h_{t+1}
  \le
  a_t h_t + 2LD^2 \frac{t+3}{(t+2)^2}$.
We now prove that the sublevel set $\{h_t\le \varepsilon\}$ is forward
invariant for $t\ge t_0$. Assume therefore that $h_t\le \varepsilon$.
If the first branch $h_t<\varepsilon/2$ holds, then \eqref{eq:coeff-small}
and the standard recurrence give $h_{t+1} < \varepsilon$ because $a_t\le 1$.
If instead the recurrence \eqref{eq:ol-refined-recursion} holds, then
\eqref{eq:coeff-small} implies
\[
  h_{t+1}
  \le
  a_t \varepsilon + c_t \frac{\varepsilon}{2}
  \le
  \varepsilon,
\]
since $a_t + \frac{c_t}{2} \le 1$.

Next suppose that $t\ge t_0$ and $h_t>\varepsilon$.
Then the first branch is impossible, so \eqref{eq:ol-refined-recursion} must
hold.
Using again \eqref{eq:coeff-small},
$h_{t+1} \le a_t h_t + c_t \frac{\varepsilon}{2}$ follows. Because $1-a_t = \frac{t+4}{(t+2)^2}$, we obtain
\[
  h_t-h_{t+1}
  \ge
  \frac{(t+4)h_t}{(t+2)^2}
  -
  \frac{\varepsilon}{2}\frac{t+3}{(t+2)^2}
  \ge
  \frac{\varepsilon(t+5)}{2(t+2)^2}
  \ge
  \frac{\varepsilon}{2(t+2)}.
\]
Therefore, on every block $N\le t\le 2N$ with $N\ge t_0$,
\begin{equation}\label{eq:block-drop}
  h_{t+1}
  \le
  h_t - \frac{\varepsilon}{4(N+1)}.
\end{equation}

Choose $m\in\NN$ so large that $2LD^2 < m\varepsilon/4$, and define $N_k \defeq 2^k(t_0+1)-1$. If some $n\ge N_m$ satisfied $h_n>\varepsilon$, then the forward invariance would force
$h_t>\varepsilon$ for every $t\in\{t_0,t_0+1,\dots,n\}$.
Applying \eqref{eq:block-drop} on each dyadic block
$N_k\le t\le 2N_k=N_{k+1}-1$ gives
$h_{N_{k+1}} \le h_{N_k} - \frac{\varepsilon}{4}$
  for all $k=0,\dots,m-1$. By induction, $
  h_{N_k} \le 2LD^2 - k\frac{\varepsilon}{4}$ for all $k \leq m$.
At $k=m$ this is negative, contradicting $h_t\ge 0$.
Hence $h_t\le \varepsilon$ for all sufficiently large $t$.
Since $\varepsilon>0$ was arbitrary, $h_t\to 0$ and equivalently $(t+2) F_t \to 0$.
\end{deferredproof}

Following the proof of \cref{thm:local-gap-zgm} we immediately obtain:

\begin{theorem}[LDS + open-loop\leanverified]
\label{thm:ol-set}
Assume that $\cC$ is compact and convex.
Let $(x_t)$ be the Frank--Wolfe iterates generated by the classical open-loop
rule $\gamma_t=2/(t+2)$.
Suppose that $\cC$ together with its LMO satisfies local dual sharpness
around the minimizer set \(\cM\) with some constants \((A,q)\), where \(q\ge2\).
Then
\[
  f(x_t)-f^\star = o(1/t).
\]
\begin{proof}
By \Cref{prop:standard-rate}, $F_t\to 0$.
Hence \Cref{lem:minset-distance} gives
\[
  \dist(x_t,\cM)\to 0.
\]
Choose $p_t\in\cM$ with
\[
  \norm{x_t-p_t} = \dist(x_t,\cM).
\]
Then $\norm{x_t-p_t}\to 0$.
Now apply \Cref{thm:conditional-ol}.
\end{proof}
\end{theorem}

\section{Quantitative Rates under a Local Hölder Error Bound}
\label{sec:quantitative}

The LDS theorems in \Cref{sec:ss-and-ls,sec:open-loop} use only that the iterates
approach the minimizer set, which is essentially optimal without further
structure on \(f\). With a local H\"older error bound, the same mechanisms become
quantitative and yield explicit tail rates after an unoptimized burn-in \(t_0\),
generalizing the uniformly-convex-set H\"older-error-bound regime; see
\Cref{tab:known-fw-rates}.

\begin{definition}[Local Hölder error bound of order \(r\)]
\label{def:local-heb}
Let \(r\ge 2\).
We say that \(f\) satisfies a \emph{local Hölder error bound of order \(r\)}
around the minimizer set \(\cM\) if there exist constants \(B,\rho>0\) such
that
\[
  \dist(x,\cM)^r
  \le
  B\bigl(f(x)-f^\star\bigr)
  \qquad
  \forall x\in\cC
  \text{ with }
  \dist(x,\cM)<\rho.
\]
\end{definition}
Sometimes the above is also written as
$\dist(x,\cM)\le c\bigl(f(x)-f^\star\bigr)^\theta$, which we recover with parameters $r=1/\theta$ and $B=c^r$. Thus quadratic growth, and in particular
$\mu$-strong convexity, gives $r=2$ (one may take $B=2/\mu$), while local
order-$p$ growth gives $r=p$. The restriction $r\ge2$ corresponds to
$0<\theta\le1/2$.

\begin{deferredtheorem}[Quantitative rate under LDS + local HEB\leanverified]
\label{thm:local-gap-heb-rate}
Suppose that $\cC$ together with its LMO satisfies local dual sharpness
around the minimizer set \(\cM\) with constants \((A,q)\), where
\(q\ge 2\).
Assume also that \(f\) satisfies a local Hölder error bound of order \(r\ge 2\)
around \(\cM\) with constants \((B,\rho)\).
Then for the global short-step rule, exact line search, and the classical
open-loop rule, there exist \(t_0\in\NN\) and \(K>0\) such that
$f(x_t)-f^\star
  \le
  \frac{K}{(t-t_0+1)^{rq/(rq-2)}}$ for all $t \geq t_0$.
\end{deferredtheorem}

\begin{deferredproof}[proof:thm:local-gap-heb-rate]
\emph{We first consider short steps or exact line search.}
By \Cref{prop:standard-rate}, \(F_t\to 0\), and then
\Cref{lem:minset-distance} gives $\dist(x_t,\cM)\to 0$.
Choose \(p_t\in\cM\) with $\delta_t \defeq \norm{x_t-p_t} = \dist(x_t,\cM)$.
After increasing \(t_0\) if needed, we may assume that for all \(t\ge t_0\),
$\delta_t < \rho$, $\delta_t^r \le B F_t$, and $F_t \le 1$.
On this tail local dual sharpness and convexity at \(p_t\) give
\[
  A\frac{F_t}{\delta_t} d_t^q \le g_t.
\]

For the short-step and exact-line-search part, if $F_t=0$ at some iteration,
the method has reached an optimum and the claimed rate is immediate. We may
therefore assume $F_t>0$ for all $t$ and define $H_t\defeq1/F_t$ as in the
proof of \Cref{thm:conditional-local-gap}. Then
\Cref{prop:standard-descent} gives
\[
  F_{t+1}
  \le
  F_t - \frac12 \min\left\{g_t,\frac{g_t^2}{L d_t^2}\right\}.
\]
Since \(F_{t+1}\le F_t\), this implies
\[
  H_{t+1}-H_t
  =
  \frac{F_t-F_{t+1}}{F_tF_{t+1}}
  \ge
  \frac{F_t-F_{t+1}}{F_t^2}.
\]
We now split into the same two cases as in \Cref{thm:conditional-local-gap}.
If \(g_t\ge L d_t^2\), then
$F_t-F_{t+1}\ge \frac12 g_t \ge \frac12 F_t$,
so
\[
  H_{t+1}-H_t \ge \frac{1}{2F_t}\ge \frac12 F_t^{-2/(rq)},
  \]
  because \(F_t\le 1\) and \(2/(rq)\le 1\).

If instead \(g_t\le L d_t^2\), then the small-gap branch gives
$
  F_t-F_{t+1}
  \ge
  \frac{g_t^2}{2L d_t^2}$,
and therefore
\[
  H_{t+1}-H_t
  \ge
  \frac{g_t^2}{2L d_t^2 F_t^2}.
\]
Using first \(g_t\ge F_t\) and then \(A(F_t/\delta_t)d_t^q \le g_t\), we obtain
$
  H_{t+1}-H_t
  \geq
  \frac{1}{2L d_t^2}$, and
$H_{t+1}-H_t
  \geq
  \frac{A^2 d_t^{2q-2}}{2L\delta_t^2}.
$
Taking the weighted geometric mean of these two bounds with weights
\((q-1)/q\) and \(1/q\), exactly as in Case 2 of
\Cref{thm:conditional-local-gap}, yields
$
  H_{t+1}-H_t
  \ge
  \frac{A^{2/q}}{2L}\,\delta_t^{-2/q}.
$
We now combine this with the local Hölder error bound \(\delta_t^r\le B F_t\), to obtain
$
  H_{t+1}-H_t
  \ge
  \frac{A^{2/q}}{2L\,B^{2/(rq)}}\,F_t^{-2/(rq)}.
$
Therefore, for all \(t\ge t_0\),
\[
  H_{t+1}-H_t \ge \eta F_t^{-2/(rq)},
  \qquad
  \eta \defeq \min\left\{\frac12,\frac{A^{2/q}}{2L\,B^{2/(rq)}}\right\}.
\]
Equivalently, $
  \frac{1}{F_{t+1}}
  \ge
  \frac{1+\eta F_t^{1-2/(rq)}}{F_t}$,
hence
\[
  F_{t+1}
  \le
  \frac{F_t}{1+\eta F_t^{1-2/(rq)}}.
\]
After increasing \(t_0\) once more, we may assume
\(\eta F_t^{1-2/(rq)}\le 1\) on the tail.
Using \(1/(1+u)\le 1-u/2\) for \(u\in[0,1]\), we obtain
\[
  F_{t+1}
  \le
  F_t - \frac{\eta}{2} F_t^{2-2/(rq)}.
\]
Applying \Cref{lem:power-descent} to the shifted tail yields
\[
  F_t = O\!\left(t^{-rq/(rq-2)}\right),
\]
which is the claimed estimate.

\emph{For the classical open-loop rule, first assume \(q>2\).}
Set \(h_t \defeq (t+2)F_t\),
\(\sigma \defeq \frac{2}{rq-2}\), \(\beta \defeq \frac{2}{q-2}\), and
\(\theta \defeq \frac{\beta}{r} = \frac{2}{r(q-2)}\).
By \Cref{prop:standard-rate}, \(0 \le h_t \le 2LD^2\) for all \(t \ge 1 \), and
we work after a burn-in \(t_0 \ge 1\).
The rescaling by \(t+2\) factors out the classical \(O(1/t)\) rate. Thus the
desired estimate \(F_t=O(t^{-rq/(rq-2)})\) is equivalent to proving
\[
  h_t = O(t^{-\sigma}).
\]
We prove this by a moving-barrier induction with
\(\varepsilon_t\defeq K(t+2)^{-\sigma}\). The role of \(K\) is only to make the
barrier high enough after a finite burn-in; it is not optimized.

As above, choose \(p_t\in\cM\) with
\(\delta_t \defeq \norm{x_t-p_t}=\dist(x_t,\cM)\).
Since \(F_t\to 0\) and \(\dist(x_t,\cM)\to 0\), after shifting the index we may
assume
\[
  \delta_t<\rho,
  \qquad
  \delta_t^r \le B F_t = B\frac{h_t}{t+2}
  \qquad \forall t\ge 0.
\]

For every \(t\), repeat the one-step argument from the proof of
\Cref{thm:conditional-ol}, but with the level \(\varepsilon\) replaced by
\(\varepsilon_t \defeq \frac{K}{(t+2)^\sigma}\). This yields the same
dichotomy: either \(h_t \le \varepsilon_t/2\), or
\[
  h_{t+1}
  \le
  a_t h_t + c_t\bigl(4L\delta_t^2 + 4R_t\delta_t^\beta\bigr),
\]
where \(a_t \defeq \frac{t(t+3)}{(t+2)^2}\),
\(c_t \defeq \frac{t+3}{(t+2)^2}\), \(R_t \defeq C_0 \varepsilon_t^{-\beta}\),
and \(C_0>0\) depends only on \((A,L,q)\).
The first alternative is the easy branch: the scaled gap is already well below
the barrier. The second alternative is the branch where the LDS correction kicks in; the price is the two error terms involving the distance
\(\delta_t\) to the minimizer set.

We prove by induction that \(h_t\le \varepsilon_t\) for all large \(t\).
Assume \(h_t\le \varepsilon_t\).
Then the local Hölder error bound gives
\(\delta_t^r \le B\frac{\varepsilon_t}{t+2}
= \frac{BK}{(t+2)^{1+\sigma}}\), hence
\[
  \delta_t^2 \le B^{2/r}K^{2/r}(t+2)^{-2(1+\sigma)/r},
  \qquad
  \delta_t^\beta \le B^\theta K^\theta (t+2)^{-(1+\sigma)\theta}.
\]
Because \(r\ge 2\), once \(K\ge 1\) we also have \(K^{2/r}\le K\) and
\(K^{-\beta}K^\theta = K^{-(r-1)\theta}\le 1\).
Consequently there exist constants \(C_1,C_2>0\), depending only on
\((A,B,L,q,r)\), such that on the refined branch
\[
  h_{t+1}
  \le
  a_t h_t
  +
  \frac{C_1 K}{(t+2)^{1+\sigma+\tau}}
  +
  \frac{C_2}{(t+2)^{1+\sigma}},
\]
where \(\tau \defeq \frac{2(q-1)}{rq-2} > 0\), as we will argue now: first observe that for the distance error term $c_t\,4L\delta_t^2$, we have
\[
  c_t\,4L\delta_t^2
  \le
  \frac{2}{t+2}\,4L B^{2/r} K^{2/r}(t+2)^{-2(1+\sigma)/r}
  \le
  \frac{C_1 K}{(t+2)^{1+\sigma+\tau}},
\]
since \(2(1+\sigma)/r-\sigma=\tau\), while for the LDS error term $c_t\,4R_t\delta_t^\beta$ we have
\[
  c_t\,4R_t\delta_t^\beta
  \le
  \frac{2}{t+2}\,4C_0 K^{-\beta}(t+2)^{\sigma\beta}
  B^\theta K^\theta (t+2)^{-(1+\sigma)\theta},
\]
and \(\theta\bigl(1-(r-1)\sigma\bigr)=\sigma\) gives precisely the exponent
\(1+\sigma\) in the denominator.

Next we will estimate the slack left by the homogeneous part
\(h_{t+1}\le a_t h_t\) of the scaled open-loop recurrence, in which we will have to fit our two error terms. To this end observe \(a_t \le 1-\frac{1}{t+2}\), while Bernoulli's inequality gives
\[
  \varepsilon_{t+1}
  =
  \frac{K}{(t+3)^\sigma}
  =
  \frac{K}{(t+2)^\sigma}\left(1+\frac{1}{t+2}\right)^{-\sigma}
  \ge
  \varepsilon_t\left(1-\frac{\sigma}{t+2}\right),
\]
hence
\[
  \varepsilon_{t+1} - a_t\varepsilon_t
  \ge
  \frac{(1-\sigma)K}{(t+2)^{1+\sigma}}.
\]
Thus, on the refined branch, once \(K\) and \(t_0\) are chosen large enough, the two error terms can be absorbed as we will argue now: Choose \(K\ge 1\) so large that \(C_2 \le \frac{1-\sigma}{4}K\). Then choose
\(t_0\) so large that for all \(t\ge t_0\), we have
\[
  \frac{C_1}{(t+2)^\tau} \le \frac{1-\sigma}{4}
  \qquad \text{and} \qquad
  2LD^2\,\frac{t+3}{(t+2)^2}
  \le
  \left(1-2^{\sigma-1}\right)\varepsilon_{t+1}.
\]
Since \(\sigma<1\), the latter is possible because the left-hand side is
\(O(t^{-1})\) while \(\varepsilon_{t+1}=K(t+3)^{-\sigma}\).
Enlarging \(K\) once more if necessary, we may assume
\(h_t\le \varepsilon_t\) for all \(t\in\{t_0,t_0+1,\dots,2t_0\}\).

Now fix \(t\ge t_0\) and assume \(h_t\le \varepsilon_t\).
If the small branch holds, then \(h_t\) starts with a factor-\(1/2\) margin
below the barrier, and the standard recurrence gives
\[
  h_{t+1}
  \le
  a_t h_t + 2LD^2\,\frac{t+3}{(t+2)^2}
  \le
  \frac{\varepsilon_t}{2}
  +
  \left(1-2^{\sigma-1}\right)\varepsilon_{t+1}
  \le
  \varepsilon_{t+1},
\]
because \(\varepsilon_t/2 \le 2^{\sigma-1}\varepsilon_{t+1}\).
If the refined branch holds, then the barrier slack absorbs the distance and LDS error
terms, and the previous two display formulas imply
\[
  h_{t+1}
  \le
  a_t \varepsilon_t
  +
  \frac{1-\sigma}{2}\frac{K}{(t+2)^{1+\sigma}}
  \le
  \varepsilon_{t+1}.
\]
Thus \(h_t\le \varepsilon_t\) propagates forward for all \(t\ge t_0\), and
therefore \(h_t = O\!\left(t^{-2/(rq-2)}\right)\). Since
\(F_t=h_t/(t+2)\), we obtain \(F_t = O\!\left(t^{-rq/(rq-2)}\right)\), as
claimed. \medskip

It remains to handle the quadratic open-loop branch $q=2$. As before, set
\[
  a_t\defeq\frac{t(t+3)}{(t+2)^2},
  \qquad
  c_t\defeq\frac{t+3}{(t+2)^2},
  \qquad
  C_{\mathrm s}\defeq\frac{4L}{A},
  \qquad
  B_0\defeq\max\{F_0,LD^2\}.
\]
The standard rate gives $0\le h_t\le 2B_0$ for every $t$. Repeating the quadratic one-step
calculation from \Cref{thm:conditional-ol} gives, on the local-HEB tail, two
branches; we refer to the first as the small branch and the second as the refined branch:
\begin{equation}\label{eq:q2-heb-dichotomy}
  h_t\le C_{\mathrm s}\delta_t
  \qquad\text{or}\qquad
  h_{t+1}\le a_t h_t+c_t\,4L\delta_t^2.
\end{equation}
These two branches hold together with the standard recurrence
\begin{equation}\label{eq:q2-heb-standard}
  h_{t+1}\le a_t h_t+2LD^2c_t
\end{equation}
and the local error bound
$\delta_t^r\le B h_t/(t+2)$ from our assumption.

\emph{Subcase $r>2$.}
Let
\[
  \sigma\defeq\frac{1}{r-1}\in(0,1),
  \qquad
  S_0\defeq(C_{\mathrm s}^rB)^\sigma,
  \qquad
  K_0\defeq\max\{1,2S_0\}.
\]
In a first step we choose $T$ beyond the local-HEB burn-in so that for every $t\ge T$,
\begin{equation}\label{eq:q2-rgt2-tail-choices}
  \frac{8LB^{2/r}}{(t+2)^\sigma}\le1-\sigma,
  \qquad
  \frac{8LD^2}{(t+3)^{1-\sigma}}
  \leq \bigl(1-2^{\sigma-1}\bigr)K_0.
\end{equation}
Finally set
\[
  K\defeq\max\{K_0,2B_0(T+2)^\sigma\},
  \qquad
  \varepsilon_t\defeq\frac{K}{(t+2)^\sigma}.
\]
Then $h_T\le\varepsilon_T$ by definition of $K$:
$h_T\leq 2B_0 \leq 2B_0(T+2)^\sigma/(T+2)^\sigma \leq
K/(T+2)^\sigma = \varepsilon_T$. This is our induction starting point to prove
$h_t \leq \varepsilon_t$ by induction.

In the small branch of \eqref{eq:q2-heb-dichotomy}, combining
$h_t \leq C_{\mathrm s} \delta_t$ with HEB gives for $h_t>0$
\[
  h_t^{r-1}\le\frac{C_{\mathrm s}^rB}{t+2},
  \qquad\text{hence}\qquad
  h_t\le\frac{S_0}{(t+2)^\sigma}\le\frac{\varepsilon_t}{2},
\]
where the last inequality uses $K \geq K_0 \geq 2S_0$; for $h_t=0$ the same bound
is immediate. Using this bound together with $a_t \leq 1$ and the standard recursion from
\eqref{eq:q2-heb-standard}, we get
\[
  h_{t+1} \leq \frac{\varepsilon_t}{2} + 2LD^2c_t,
\]
and we need to bound the right-hand side by $\varepsilon_{t+1}$. For this we
rewrite and bound the curvature term
\[
  2LD^2c_t = \frac{2LD^2}{(t+3)^{1-\sigma}}
   \left(\frac{t+3}{t+2}\right)^2
   \frac{1}{(t+3)^\sigma}
  \leq \frac{8LD^2}{(t+3)^{1-\sigma}}
   \frac{1}{(t+3)^\sigma},
\]
as $(t+3) / (t+2) \leq 2$. Combining this with the second condition in
\eqref{eq:q2-rgt2-tail-choices} and $K \geq K_0$, we obtain
\[
  2LD^2c_t
  \le\bigl(1-2^{\sigma-1}\bigr)\frac{K}{(t+3)^\sigma}
  =\bigl(1-2^{\sigma-1}\bigr)\varepsilon_{t+1}.
\]
Moreover,
\[
  \frac{\varepsilon_t}{2}
  =\frac{K}{2(t+2)^\sigma}
  =\frac{1}{2} \left( \frac{t+3}{t+2} \right)^\sigma \varepsilon_{t+1}
  \le2^{\sigma - 1} \varepsilon_{t+1}.
\]
Combining the two bounds gives
\[
  h_{t+1} \leq \frac{\varepsilon_t}{2} + 2LD^2c_t
  \leq \varepsilon_{t+1}.
\]

In the refined branch, the induction hypothesis, the definition of
$\varepsilon_t$, and the HEB imply
\[
  \delta_t^2
  \le\left(\frac{B\varepsilon_t}{t+2}\right)^{2/r}
  =\left(\frac{BK}{(t+2)^{1+\sigma}}\right)^{2/r}
  =\frac{(BK)^{2/r}}{(t+2)^{2(1+\sigma)/r}}
  =\frac{B^{2/r}K^{2/r}}{(t+2)^{2\sigma}},
\]
where the last equality uses $\sigma=1/(r-1)$, so that
$2(1+\sigma)/r=2\sigma$. Since $K\ge1$, one has $K^{2/r}\le K$, and since
$c_t\le2/(t+2)$, the first condition in
\eqref{eq:q2-rgt2-tail-choices} gives
\[
  4Lc_t\delta_t^2
  \le\frac{(1-\sigma)K}{(t+2)^{1+\sigma}}.
\]
On the other hand,
\[
  \varepsilon_{t+1}
  =\varepsilon_t\left(1+\frac{1}{t+2}\right)^{-\sigma}
  \ge\varepsilon_t\left(1-\frac{\sigma}{t+2}\right)
\]
by Bernoulli's inequality. Using
$a_t\le1-1/(t+2)$ therefore yields
\[
  \varepsilon_{t+1}-a_t\varepsilon_t
  \ge\frac{(1-\sigma)K}{(t+2)^{1+\sigma}}.
\]
Thus the refined recurrence and the induction hypothesis give
\begin{align*}
  h_{t+1}
  &\le a_t h_t+4Lc_t\delta_t^2\\
  &\le a_t\varepsilon_t
    +\frac{(1-\sigma)K}{(t+2)^{1+\sigma}}\\
  &\le a_t\varepsilon_t
    +\bigl(\varepsilon_{t+1}-a_t\varepsilon_t\bigr)
   =\varepsilon_{t+1}.
\end{align*}
Together with the base case and the small-branch estimate, this proves
$h_t\le\varepsilon_t$ for every $t\ge T$. Consequently,
$h_t=O(t^{-1/(r-1)})$ and $F_t=O(t^{-r/(r-1)})$.

\emph{Subcase $r=2$.}
At $r=2$, the exponent $\sigma=1$ makes the first-order slack used above
vanish, so we need to adjust our argument using a shifted barrier. Define
\[
  C_q\defeq4LB,
  \qquad
  s\defeq C_q^2+C_q+10,
\]
and choose $T$ beyond the local-HEB burn-in and large enough that
\[
  T\ge\lceil4C_q+4\rceil,
\]
and further set
\[
  K_0\defeq
  \max\{1,4C_{\mathrm s}^2Bs,8LD^2(s+1)\},
  \qquad
  K\defeq\max\{K_0,2B_0(T+s)\},
  \qquad
  \varepsilon_t\defeq\frac{K}{t+s}.
\]
Similar to before, the definition of $K$ initializes the induction:
\[
  h_T\le2B_0\le\frac{K}{T+s}=\varepsilon_T.
\]
Assume now that $t\ge T$ and $h_t\le\varepsilon_t$. If we are in the small branch and $h_t>0$, then the local HEB gives
\[
  h_t^2
  \le C_{\mathrm s}^2\delta_t^2
  \le\frac{C_{\mathrm s}^2B}{t+2}h_t,
\]
so
\[
  h_t\le\frac{C_{\mathrm s}^2B}{t+2}
  \le\frac{C_{\mathrm s}^2Bs}{t+s}
  \le\frac{K}{4(t+s)}
  =\frac{\varepsilon_t}{4},
\]
where we used $s/(t+s)\ge1/(t+2)$ because $s\ge1$ in the second inequality and
$K\ge4C_{\mathrm s}^2Bs$ in the third inequality. The case $h_t=0$ is again immediate. Since
$a_t\le1$, the standard recurrence yields
\[
  h_{t+1}\le\frac{\varepsilon_t}{4}+2LD^2c_t.
\]
The first term fits into half of the next barrier $\varepsilon_{t+1}$ because $t+s\ge1$:
\[
  \frac{\varepsilon_t}{4}
  =\frac{K}{4(t+s)}
  \le\frac{K}{2(t+s+1)}
  =\frac{\varepsilon_{t+1}}{2}.
\]
For the standard curvature term, we use $c_t\le2/(t+2)$ and
$K\ge8LD^2(s+1)$ to obtain
\begin{align*}
  2LD^2c_t
  &\le\frac{4LD^2}{t+2}\\
  &\le\frac{4LD^2(s+1)}{t+s+1}\\
  &\le\frac{K}{2(t+s+1)}
   =\frac{\varepsilon_{t+1}}{2}.
\end{align*}
Therefore the standard recurrence gives $h_{t+1}\le\varepsilon_{t+1}$ in the
small branch.
If the refined branch holds, then the local HEB gives
$\delta_t^2\le Bh_t/(t+2)$, and consequently
\begin{align*}
  h_{t+1}
  &\le a_t h_t+4Lc_t\delta_t^2\\
  &\le\left(
    \frac{t(t+3)}{(t+2)^2}
    +C_q\frac{t+3}{(t+2)^3}
  \right)h_t.
\end{align*}
The choice $s=C_q^2+C_q+10$ ensures the elementary coefficient bound
\[
  \frac{\frac{t(t+3)}{(t+2)^2}
    +C_q\frac{t+3}{(t+2)^3}}{t+s}
  \le\frac{1}{t+s+1},
\]
whenever $t\ge4C_q+4$, which we prove below via a somewhat technical calculation where in essence $s$ as shift is chosen so that the above inequality holds.
Since $T\ge4C_q+4$, the induction hypothesis therefore gives
\begin{align*}
  h_{t+1}
  &\le K\,
  \frac{\frac{t(t+3)}{(t+2)^2}
    +C_q\frac{t+3}{(t+2)^3}}{t+s}\\
  &\le\frac{K}{t+s+1}
   =\varepsilon_{t+1}.
\end{align*}
The base case and both branches prove $h_t\le K/(t+s)=O(1/t)$ for every
$t\ge T$. Hence
\[
  F_t=\frac{h_t}{t+2}=O(t^{-2}).
\]
Combining the two subcases gives
\[
  F_t=O\!\left(t^{-r/(r-1)}\right),
\]
which is exactly $O(t^{-rq/(rq-2)})$ at $q=2$.

It remains to prove
\[
  \frac{\frac{t(t+3)}{(t+2)^2}
    +C_q\frac{t+3}{(t+2)^3}}{t+s}
  \le\frac{1}{t+s+1},
\]
whenever $t\ge4C_q+4$. To see this, set
\[
  \kappa_t\defeq
  \frac{t(t+3)}{(t+2)^2}
  +C_q\frac{t+3}{(t+2)^3}.
\]
Since $t+s>0$, the desired inequality is equivalent to
\[
  \kappa_t
  \le\frac{t+s}{t+s+1}
  =1-\frac{1}{t+s+1}.
\]
We first lower-bound the gap to $1$. We have
\begin{align*}
  1 - \kappa_t
  &= \frac{t+4}{(t+2)^2}
    - C_q\frac{t+3}{(t+2)^3}\\
  &\geq \frac{1}{t+2} - \frac{2C_q}{(t+2)^2} = \frac{t+2-2C_q}{(t+2)^2},
\end{align*}
where we used $(t+3)/(t+2)\le2$. It therefore remains to show
\[
  \frac{t+2-2C_q}{(t+2)^2}\ge\frac{1}{t+s+1}.
\]
After cross-multiplication, this is equivalent to
\[
  \bigl((t+2)-2C_q\bigr)(t+s+1)-(t+2)^2\ge0.
\]
Using $t+s+1=(t+2)+(s-1)$ and keeping $(t+2)$ grouped, the left-hand side is
\begin{align*}
  &\bigl((t+2)-2C_q\bigr)\bigl((t+2)+(s-1)\bigr)-(t+2)^2\\
  &\qquad=(t+2)(s-1-2C_q)-2C_q(s-1)\\
  &\qquad=(t+2)(C_q^2-C_q+9)-2C_q(C_q^2+C_q+9),
\end{align*}
where the last equality uses $s=C_q^2+C_q+10$. As $t\ge4C_q+4$, one has
$t+2\ge4C_q+6$ and moreover $C_q^2-C_q+9>0$, so that
\begin{align*}
  &(t+2)(C_q^2-C_q+9)-2C_q(C_q^2+C_q+9)\\
  &\qquad\ge(4C_q+6)(C_q^2-C_q+9)-2C_q(C_q^2+C_q+9)\\
  &\qquad=2C_q^3+12C_q+54>0.
\end{align*}
Consequently,
\[
  1-\kappa_t\ge\frac{1}{t+s+1},
\]
which gives $\kappa_t\le(t+s)/(t+s+1)$ and therefore proves the claimed
coefficient bound.
\end{deferredproof}

\section{A Computational Example}
\label{subsec:computational-results}

We consider the feasible region from \Cref{rem:local-gap-examples}, which satisfies LDS but is not uniformly convex:
\[
  \cC_{\mathrm{stad}}
  \defeq
  \bigl([-1,1]\times\{0\}\bigr)+B_2(0,1),
  \qquad
  p\defeq (2,0),
\]
and the objective family, convex and \(1\)-smooth on \(\cC_{\mathrm{stad}}\),
\[
  f_c(x_1,x_2)
  \defeq
  \frac12 x_2^2 + \psi(c-x_1),
  \qquad
  \psi(u)\defeq u-\arctan(u),
  \qquad
  c\ge 2.
\]
On \(\cC_{\mathrm{stad}}\) one has \(c-x_1\ge0\), and the same restriction can be obtained from a global convex smooth extension if desired. For \(c=2\), \(p\) is the minimizer and \(\nabla f_2(p)=0\), which is the interesting case. In \Cref{fig:stadium-local-gap-regimes} we depict the resulting primal-gap trajectories for short steps, exact line search, and for the classical open-loop rule from the starting point \(x_0=e_2\), using \(L=1\) in the short-step rule, together with the stadium geometry and the level sets of \(f_2\). The minimizer lies on the rounded cap where we show that LDS holds in the appendix; the flat top and bottom segments show why the stadium as a whole is not uniformly convex and why LDS does not hold for an arbitrary minimizer location. Analogous arbitrarily high-dimensional examples are obtained from capsules or truncated Euclidean balls; we use \(\RR^2\) only for visualization. A second example with known exponents (a quadratic objective on an $\ell_4$ ball with LDS power $q=4$ and HEB order $r=2$) is reported in \Cref{sec:l4-additional-example}; the predicted rate is $O(t^{-4/3})$.

\begin{figure}[t]
  \centering
  \includegraphics[width=\textwidth]{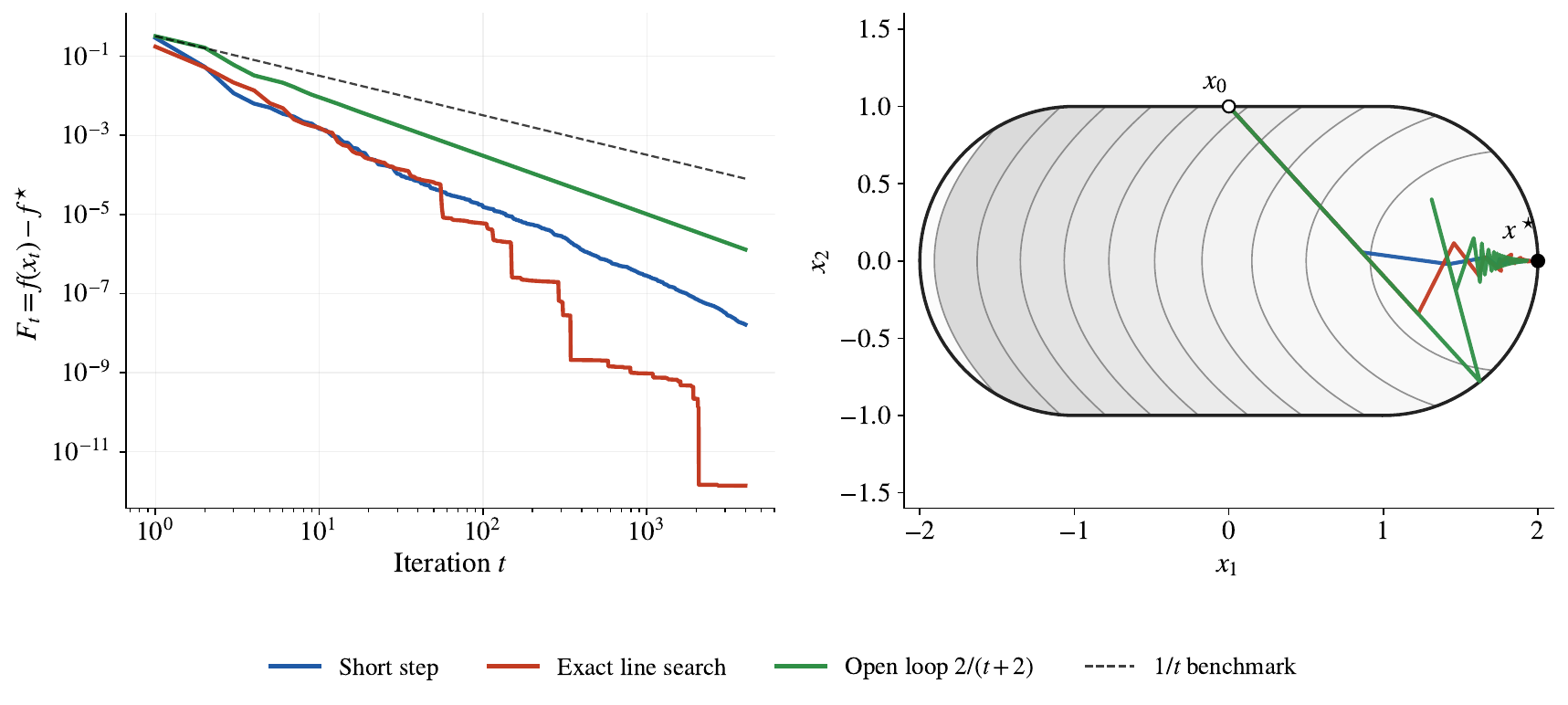}
  \caption{Primal-gap decay and geometry for the stadium.
  The left panel shows primal gaps for iterations \(t\ge 1\) on a log-log plot.
  The right panel shows the feasible region, the level sets of \(f_2\)
  restricted to \(\cC_{\mathrm{stad}}\), and the corresponding Frank--Wolfe
  trajectories. The finite-horizon curves fall below the displayed $1/t$ guide; the plot is illustrative and does not by itself establish the asymptotic little-$o$ statement.}
  \label{fig:stadium-local-gap-regimes}
\end{figure}

\paragraph{Limitations.} LDS is a condition on a reference region and a fixed
LMO selector. In our theorems that region is the objective-dependent minimizer
set, so \emph{verifying} LDS may require knowing where the minimizers lie.
However, the Frank--Wolfe algorithm automatically adapts when LDS holds,
without requiring such verification. Our results are of the form
\(tF_t\to0\) for a given LDS instance, not a uniform rate for
\(\sup_I F_t(I)\) over a class; the associated burn-in is instance-dependent
and can be arbitrarily long. Our explicit polynomial rates require an
additional objective condition such as a local Hölder error bound and remain
tail estimates after an unoptimized burn-in. Nevertheless, a finite-order HEB
is not necessary for a quantified nonpolynomial improvement over \(1/t\):
\Cref{prop:superflat-short-step} provides an LDS instance without any
finite-order HEB whose short-step tail is
\(F_t\sim L/[t(\log t)^4]\). Flat exposed faces obstruct LDS in standard
boundary and low-rank regimes for spectrahedra and nuclear-norm balls.

\IfNotNeuripsBuild{%
\section*{Acknowledgements}
This research was conducted in the context of the Agentic AI for Mathematics (EF-LiOpt-3) project at the Berlin Mathematics Research Center MATH$^+$ (EXC-2046/2, project ID 390685689), funded by the Deutsche Forschungsgemeinschaft (DFG, German Research Foundation) under Germany's Excellence Strategy. Significant parts of the auxiliary Lean 4 formalization were produced using an auto-formalization extension of our Agentic Researcher framework \citep{zimmer2026agentic}. I thank Christoph Spiegel for independently checking the Lean verification code and for improving both the structure of the Lean code and the guidelines for the auto-formalization. All remaining issues are solely mine.
 }

\bibliography{refs}

\begin{thebibliography}{25}
\providecommand{\natexlab}[1]{#1}
\providecommand{\url}[1]{\texttt{#1}}
\expandafter\ifx\csname urlstyle\endcsname\relax
  \providecommand{\doi}[1]{doi: #1}\else
  \providecommand{\doi}{doi: \begingroup \urlstyle{rm}\Url}\fi

\bibitem[Braun et~al.(2019)Braun, Pokutta, Tu, and Wright]{braun2019blended}
G{\'a}bor Braun, Sebastian Pokutta, Dan Tu, and Stephen Wright.
\newblock Blended conditional gradients.
\newblock In \emph{International conference on machine learning}, pages 735--743. PMLR, 2019.

\bibitem[Braun et~al.(2025)Braun, Carderera, Combettes, Hassani, Karbasi, Mokhtari, and Pokutta]{braun2025cgm}
G{\'a}bor Braun, Alejandro Carderera, Cyrille~W. Combettes, Hamed Hassani, Amin Karbasi, Aryan Mokhtari, and Sebastian Pokutta.
\newblock \emph{Conditional Gradient Methods: From Core Principles to {AI} Applications}.
\newblock {MOS-SIAM} Series on Optimization. {Society for Industrial and Applied Mathematics}, 2025.
\newblock ISBN 978-1-61197-856-8.
\newblock \doi{10.1137/1.9781611978568}.

\bibitem[Canon and Cullum(1968)]{Canon_FWbound68}
M.~D. Canon and C.~D. Cullum.
\newblock A tight upper bound on the rate of convergence of {F}rank--{W}olfe algorithm.
\newblock \emph{{SIAM} Journal on Control}, 6\penalty0 (4):\penalty0 509--516, 1968.
\newblock \doi{10.1137/0306032}.

\bibitem[Demyanov and Rubinov(1970)]{demyanov1970approximate}
Vladimir~F. Demyanov and Alexander~M. Rubinov.
\newblock \emph{{Approximate Methods in Optimization Problems}}, volume~32 of \emph{Modern Analytic and Computational Methods in Science and Mathematics}.
\newblock American Elsevier Publishing Company, New York, 1970.

\bibitem[Dunn(1979)]{dunn1979rates}
J.~C. Dunn.
\newblock {Rates of Convergence for Conditional Gradient Algorithms near Singular and Nonsingular Extremals}.
\newblock \emph{SIAM Journal on Control and Optimization}, 17\penalty0 (2):\penalty0 187--211, 1979.

\bibitem[Frank and Wolfe(1956)]{frank1956algorithm}
Marguerite Frank and Philip Wolfe.
\newblock An algorithm for quadratic programming.
\newblock \emph{Naval Research Logistics Quarterly}, 3\penalty0 (1--2):\penalty0 95--110, 1956.

\bibitem[Garber and Hazan(2015)]{garber2015faster}
Dan Garber and Elad Hazan.
\newblock Faster rates for the {Frank-Wolfe} method over strongly-convex sets.
\newblock In \emph{International Conference on Machine Learning}, pages 541--549. PMLR, 2015.

\bibitem[Grimmer and Liu(2026)]{grimmer2026uniform}
Benjamin Grimmer and Ning Liu.
\newblock Lower bounds for linear minimization oracle methods optimizing over strongly convex sets.
\newblock \emph{arXiv preprint arXiv:2602.22608}, 2026.

\bibitem[Gu{\'e}lat and Marcotte(1986)]{gm86}
Jacques Gu{\'e}lat and Patrice Marcotte.
\newblock Some comments on {W}olfe's `away step'.
\newblock \emph{Mathematical Programming}, 35\penalty0 (1):\penalty0 110--119, 1986.
\newblock \doi{10.1007/BF01589445}.

\bibitem[Halbey et~al.(2026)Halbey, Deza, Zimmer, Roux, Stellato, and Pokutta]{halbey2026lower}
Jannis Halbey, Daniel Deza, Max Zimmer, Christophe Roux, Bartolomeo Stellato, and Sebastian Pokutta.
\newblock {Lower Bounds for Frank--Wolfe on Strongly Convex Sets}.
\newblock \emph{arXiv preprint arXiv:2602.04378}, 2026.

\bibitem[Jaggi(2013)]{jaggi2013revisiting}
Martin Jaggi.
\newblock Revisiting {F}rank--{W}olfe: Projection-free sparse convex optimization.
\newblock In \emph{Proceedings of the 30th International Conference on Machine Learning}, volume~28 of \emph{{JMLR} Workshop and Conference Proceedings}, pages 427--435. JMLR.org, 2013.
\newblock URL \url{http://proceedings.mlr.press/v28/jaggi13.html}.

\bibitem[Kerdreux et~al.(2021{\natexlab{a}})Kerdreux, d'Aspremont, and Pokutta]{kerdreux2021localglobal}
Thomas Kerdreux, Alexandre d'Aspremont, and Sebastian Pokutta.
\newblock {Local and Global Uniform Convexity Conditions}.
\newblock \emph{arXiv preprint arXiv:2102.05134}, 2021{\natexlab{a}}.

\bibitem[Kerdreux et~al.(2021{\natexlab{b}})Kerdreux, d'Aspremont, and Pokutta]{kerdreux2021projection}
Thomas Kerdreux, Alexandre d'Aspremont, and Sebastian Pokutta.
\newblock {Projection-Free Optimization on Uniformly Convex Sets}.
\newblock In \emph{Proceedings of the 24th International Conference on Artificial Intelligence and Statistics (AISTATS)}, volume 130 of \emph{Proceedings of Machine Learning Research}, pages 19--27, 2021{\natexlab{b}}.
\newblock URL \url{https://proceedings.mlr.press/v130/kerdreux21a.html}.

\bibitem[Kerdreux et~al.(2022)Kerdreux, d'Aspremont, and Pokutta]{kerdreux2018restarting}
Thomas Kerdreux, Alexandre d'Aspremont, and Sebastian Pokutta.
\newblock Restarting {F}rank--{W}olfe: Faster rates under {H{\"o}lderian Error Bounds}.
\newblock \emph{Journal of Optimization Theory and Applications}, 192:\penalty0 799--829, 2022.
\newblock \doi{10.1007/s10957-021-01989-7}.

\bibitem[Lacoste-Julien and Jaggi(2015)]{lacoste2015global}
Simon Lacoste-Julien and Martin Jaggi.
\newblock On the global linear convergence of {Frank-Wolfe} optimization variants.
\newblock \emph{Advances in neural information processing systems}, 28, 2015.

\bibitem[Lan(2013)]{lan2013complexity}
Guanghui Lan.
\newblock The complexity of large-scale convex programming under a linear optimization oracle.
\newblock Technical report, Department of Industrial and Systems Engineering, University of Florida, 2013.
\newblock URL \url{https://optimization-online.org/2013/05/3904/}.

\bibitem[Levitin and Polyak(1966)]{levitin1966constrained}
Evgeny~S. Levitin and Boris~T. Polyak.
\newblock Constrained minimization methods.
\newblock \emph{U.S.S.R. Computational Mathematics and Mathematical Physics}, 6\penalty0 (5):\penalty0 1--50, 1966.
\newblock \doi{10.1016/0041-5553(66)90114-5}.

\bibitem[Moura and Ullrich(2021)]{moura2021lean}
Leonardo~de Moura and Sebastian Ullrich.
\newblock The lean 4 theorem prover and programming language.
\newblock In \emph{International Conference on Automated Deduction}, pages 625--635. Springer, 2021.

\bibitem[Pe{\~n}a(2023)]{pena2023affine}
Javier~F. Pe{\~n}a.
\newblock {Affine Invariant Convergence Rates of the Conditional Gradient Method}.
\newblock \emph{{SIAM} Journal on Optimization}, 33\penalty0 (4):\penalty0 2654--2674, 2023.

\bibitem[Pokutta(2024)]{pokutta2023short}
Sebastian Pokutta.
\newblock {The Frank--Wolfe Algorithm: A Short Introduction}.
\newblock \emph{Jahresbericht der Deutschen Mathematiker-Vereinigung}, 126:\penalty0 3--35, 2024.
\newblock \doi{10.1365/s13291-023-00275-x}.

\bibitem[Tsuji et~al.(2022)Tsuji, Tanaka, and Pokutta]{tsuji2022pairwise}
Kazuma~K Tsuji, Ken'ichiro Tanaka, and Sebastian Pokutta.
\newblock Pairwise conditional gradients without swap steps and sparser kernel herding.
\newblock In \emph{International Conference on Machine Learning}, pages 21864--21883. PMLR, 2022.

\bibitem[Wirth et~al.(2023)Wirth, Kerdreux, and Pokutta]{WKP2022}
Elias Wirth, Thomas Kerdreux, and Sebastian Pokutta.
\newblock {Acceleration of Frank-Wolfe Algorithms with Open-Loop Step-Sizes}.
\newblock In \emph{Proceedings of the 26th International Conference on Artificial Intelligence and Statistics (AISTATS)}, volume 206 of \emph{Proceedings of Machine Learning Research}, pages 77--100. PMLR, 2023.
\newblock URL \url{https://proceedings.mlr.press/v206/wirth23a.html}.

\bibitem[Wirth et~al.(2025)Wirth, Pe{\~n}a, and Pokutta]{WPP2023}
Elias Wirth, Javier Pe{\~n}a, and Sebastian Pokutta.
\newblock {Accelerated Affine-Invariant Convergence Rates of the Frank-Wolfe Algorithm with Open-Loop Step-Sizes}.
\newblock \emph{Mathematical Programming}, 214\penalty0 (1):\penalty0 201--245, 2025.
\newblock \doi{10.1007/s10107-024-02180-2}.

\bibitem[Wolfe(1970)]{wolfe70}
Philip Wolfe.
\newblock Convergence theory in nonlinear programming.
\newblock In \emph{Integer and Nonlinear Programming}, pages 1--36. North-Holland, 1970.

\bibitem[Zimmer et~al.(2026)Zimmer, Pelleriti, Roux, and Pokutta]{zimmer2026agentic}
Max Zimmer, Nico Pelleriti, Christophe Roux, and Sebastian Pokutta.
\newblock {The Agentic Researcher: A Practical Guide to AI-Assisted Research in Mathematics and Machine Learning}.
\newblock \emph{arXiv preprint arXiv:2603.15914}, 2026.

\end{thebibliography}
\bibliographystyle{plainnat}

\clearpage

\appendix

\section{Appendix}

\IfNeuripsBuild{%
\begin{table}[!ht]
\footnotesize
\centering
\setlength{\tabcolsep}{4pt}
\renewcommand{\arraystretch}{1.12}
\begin{tabularx}{\textwidth}{@{}>{\raggedright\arraybackslash}p{1.25cm}>{\raggedright\arraybackslash}p{1.9cm}>{\raggedright\arraybackslash}X>{\raggedright\arraybackslash}p{3cm}>{\raggedright\arraybackslash}p{3.00cm}c@{}}
\toprule
$f$ & $\cC$ & minimizer / regime & rule & bound & key \\
\midrule
\multicolumn{6}{@{}l}{\emph{\textbf{Classical compact-convex regime}}}\\
convex & convex & unrestricted & OL / SS / LS & $O(1/t)$ & [A] \\
convex & convex & unrestricted & FW / FO-LMO & $\Omega(1/t)$ & [B] \\
\addlinespace[0.2em]
\multicolumn{6}{@{}l}{\emph{\textbf{Location-based acceleration}}}\\
SC & convex & $x^\star \in \interior(\cC)$ & SS / LS & $O(e^{-rt})$ & [C] \\
convex & UC$(q)$ & LBG, $\inf_{x\in\cC}\norm{\nabla f(x)} > 0$ & SS / LS & \(O(e^{-rt})\) for \(q=2\); \(O(t^{-q/(q-2)})\) for \(q>2\) & [D] \\
convex & UC$(q)$ & LBG, $\inf_{x\in\cC}\norm{\nabla f(x)} > 0$ & OL \(\ell/(t+\ell)\) & \(O(t^{-\ell})\) for \(q=2\); \(O(t^{-\ell+\varepsilon}+t^{-q/(q-2)})\) for \(q>2\) & [E] \\
\addlinespace[0.2em]
\multicolumn{6}{@{}l}{\emph{\textbf{Curved sets without location information}}}\\
HEB\((\theta)\) & UC$(q)$ & unrestricted & SS / LS & \(O(t^{-1/(1-2\theta/q)})\) & [F] \\
HEB\((\theta)\) & UC$(q)$ & unrestricted & OL \(\ell/(t+\ell)\) & \(O(t^{-\ell+\varepsilon}+t^{-1/(1-2\theta/q)})\) & [G] \\
SC & SC & unrestricted & SS / LS & \(O(1/t^2)\) & [H] \\
SC & SC & unrestricted & SS / LS & \(\Omega(1/t^2)\) & [I] \\
SC & SC & unrestricted & deterministic FO-LMO & \(\Omega(1/t^2)\) & [J] \\
SC quad. & \(\ell_p\)-ball, \(p\ge 3\) & zero-gradient boundary minimizer & SS / LS & \(\Theta(t^{-p/(p-1)})\) & [K] \\
\addlinespace[0.2em]
\multicolumn{6}{@{}l}{\emph{\textbf{Wolfe's proper-face regime}}}\\
SC & polytope & $x^\star \in \relinterior(F)$, \(\dim F \ge 1\) & SS / LS & \(\Omega(1/(t\log^{2+\delta}t))\) i.o. for every \(\delta>0\) & [L] \\
HEB\((\theta)\) & polytope & $x^\star \in \relinterior(F)$, active face identified & OL \(\ell/(t+\ell)\) & \(O(t^{-1/(1-\theta)})\) & [M] \\
\addlinespace[0.2em]
\multicolumn{6}{@{}l}{\emph{\textbf{Beyond-\(1/t\) via local dual sharpness (this paper)}}}\\
convex & convex & LDS, unrestricted & SS / LS & pointwise \(o(1/t)\) &  \\
convex & convex & LDS$(q)$, unrestricted & OL \(\ell/(t+\ell)\), \(\ell\ge2\) & pointwise \(o(1/t)\) &  \\
HEB\((\theta)\) & convex & LDS$(q)$, unrestricted & SS / LS & \(O(t^{-1/(1-2\theta/q)})\) &  \\
HEB\((\theta)\) & convex & LDS$(q)$, unrestricted & OL \(2/(t+2)\) & \(O(t^{-1/(1-2\theta/q)})\) & \\
\bottomrule
\end{tabularx}
\caption{Known convergence-rate landscape for vanilla Frank--Wolfe. All objective classes in the \(f\)-column are assumed convex and smooth, and all feasible sets in the \(\cC\)-column are assumed compact; ``SC'' means strongly convex, ``SC quad.'' means strongly convex quadratic \(f\), UC$(q)$ means power-type \(q\)-uniform convexity, HEB\((\theta)\) denotes a (possibly local) H\"olderian error bound with exponent \(0<\theta\le 1/2\) (manuscript uses \(r=1/\theta\)), and LDS denotes local dual sharpness, with LDS$(q)$ indicating the corresponding power \(q\). SS is the global short-step rule, LS is exact line search, OL denotes the displayed open-loop rule or family, and FO-LMO denotes a deterministic first-order method with one linear-minimization-oracle call per iteration. The LDS little-$o$ rows are fixed-instance asymptotic statements. Rows [B], [I], and [J] are uniform worst-case or finite-horizon lower bounds; the remaining rows use the quantifiers stated in their entries and the accompanying discussion. For respective references see keys in \Cref{rem:current-sota-long}.}
\IfNotNeuripsBuild{\label{tab:known-fw-rates}}
\label{tab:known-fw-rates-long}
\end{table}

\subsection{Related Work and State of the Art\IfNeuripsBuild{ (Comprehensive Version)}}
\IfNotNeuripsBuild{\label{rem:current-sota}}
\label{rem:current-sota-long}

\Cref{tab:known-fw-rates-long} summarizes the vanilla Frank--Wolfe rate landscape most
relevant to the present paper. The organizing principle is the mechanism that
either improves or obstructs the classical \(O(1/t)\) rate. We also separate
large-scale oracle lower bounds from smooth low-dimensional witnesses whenever
both are available.

\paragraph{Classical compact-convex regime.}
On a general compact convex set, vanilla Frank--Wolfe with exact line search,
global short steps, or the classical open-loop rule has the familiar
\(O(1/t)\) primal-gap guarantee [A]. The line-search analysis goes back to
\citet{frank1956algorithm}, while \citet{levitin1966constrained} and
\citet{jaggi2013revisiting} give the modern smooth-convex formulation. This
baseline is sharp in general: simplex-type examples in \cite{jaggi2013revisiting} already show an
\(\Omega(1/t)\) obstruction in the low-iteration regime, and
\citet{lan2013complexity} extends the lower-bound picture to deterministic
first-order methods that access the feasible region through one LMO call per
iteration [B].

\paragraph{Location-based acceleration.}
A first way to beat \(1/t\) is to use information about the position of the
minimizer. If the minimizer lies in the interior of the feasible region, then
short step and exact line search become linear [C], in a line of work going back
to \citet[\S 8]{wolfe70} and \citet{gm86}. A second mechanism is the
lower-bounded-gradient (LBG) regime on curved sets. When
\(\inf_{x\in\cC}\norm{\nabla f(x)}>0\) and \(\cC\) is \(q\)-uniformly convex,
the Frank--Wolfe gap controls the displacement strongly enough to give linear
convergence for \(q=2\) and faster-than-\(1/t\) polynomial decay for \(q>2\).
This picture is classical for strongly convex sets
\citep{levitin1966constrained,demyanov1970approximate,dunn1979rates}, is
presented in modern form by \citet{garber2015faster}, and is extended to
uniformly convex sets and affine-invariant growth frameworks by
\citet{kerdreux2021projection,pena2023affine} [D]. For open-loop rules
\(\gamma_t=\ell/(t+\ell)\), the same mechanism also yields accelerated rates;
the current state of the art is the affine-invariant treatment of
\citet{WPP2023}, building on the earlier \(\eta_t=4/(t+4)\) analysis of
\citet{WKP2022} [E]. Note that rows [C] and [D] are not exhaustive, as they leave out the boundary case in which the unconstrained minimizer lies exactly on $\partial\cC$, so that $\nabla f(x^\star)=0$. In fact, it is this boundary zero-gradient regime that is used by the hard SC/SC instances in [I] and [J].

\paragraph{Curved sets without location information.}
A different family of results does not assume either an interior minimizer or a
gradient bounded away from zero. Here the improvement comes from combining
curvature of the feasible region with growth on the objective. In particular,
\citet{kerdreux2021projection} [F] and \citet{pena2023affine} [G] show that a
H\"olderian error bound together with uniform convexity of the set yields
explicit polynomial rates for short step, line search, and open loop.
Strongly convex objectives over strongly convex sets fit into this picture and
give the familiar \(O(1/t^2)\) upper bound of \citet{garber2015faster} [H]. Recent
lower bounds show that this benchmark is essentially sharp in two complementary
senses: \citet{halbey2026lower} [I] obtain matching smooth small-scale lower bounds
on Euclidean balls and ellipsoids, while \citet{grimmer2026uniform} [J] prove
large-scale lower bounds for deterministic LMO methods. In our notation, the
SC/SC regime is a special case of LDS\((2)\) + HEB\((1/2)\) (equivalently
\(r=2\)), so [I] and [J] match the \(t^{-2}\) exponent obtained by specializing
\Cref{thm:local-gap-heb-rate} in worst-case complexity, although [I] allows the
hard initialization and [J] the hard problem instance and dimension to depend
on the prescribed iteration horizon. At the low-dimensional end, the explicit quadratic over an \(\ell_p\)-ball of
\citet{zimmer2026agentic} [K] exhibits the slower exponent
\(\Theta(t^{-p/(p-1)})\) for short step and exact line search at a
zero-gradient boundary minimizer.

\paragraph{Wolfe's proper-face regime.}
When the minimizer lies in the relative interior of a proper face of a
polytope, the behavior changes again. Wolfe already observed that short step
and exact line search can become much slower in that regime and \citet{Canon_FWbound68} later refined this observation. A precise formulation
of their result is given in \citet[Theorem~2.8]{braun2025cgm}: once an iterate
\(x_{t_0}\notin F\) with
\(f(x_{t_0})<\min_{v\in\operatorname{Vert}(F)}f(v)\) has been reached, for every
\(\delta>0\), the primal gap is \(\Omega(1/(t\log^{2+\delta}t))\) for infinitely
many \(t\) [L]. Consequently, no \(O(1/(t\log^{2+\eta}t))\) guarantee, with
\(\eta>0\), can hold uniformly over this proper-face regime.
Open-loop rules behave differently here: once the active face has been
identified, the affine-invariant weak-growth framework of \citet{WPP2023} [M]
gives \(O(t^{-1/(1-\theta)})\) rates, and for strongly convex objectives this
recovers an \(O(t^{-2})\) decay.

All results in this related work section concern the vanilla Frank--Wolfe algorithm of \citep{frank1956algorithm,levitin1966constrained}. Restarted \citep{kerdreux2018restarting},
away-step/pairwise \citep{lacoste2015global}, blended variants \citep{braun2019blended,tsuji2022pairwise} or otherwise modified variants can exploit additional structural assumptions and yield further acceleration, but those methods are complementary
to the present discussion.
 }

\subsection{Auxiliary Results}

The following inequality is standard; see e.g., \citep{kerdreux2021projection,braun2025cgm}.

\begin{lemma}[Uniformly-convex geometric gap bound\leanverified]\label{lem:uc-gap}
Let $\cC$ be $(\alpha,q)$-uniformly convex, let $x\in\cC$, and let
$s\in\argmin_{y\in\cC}\innp{g,y}$.
Then
\begin{equation}
  \frac{\alpha}{2}\norm{g}\,\norm{x-s}^q
  \le
  \innp{g, x-s}.
\end{equation}
\end{lemma}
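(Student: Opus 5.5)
The idea is to use the uniform convexity of $\cC$ at the midpoint of $x$ and $s$ and compare with the minimality of $s$ for the linear functional $\innp{g,\cdot}$. If $g=0$ or $x=s$ there is nothing to prove, so I assume $g\neq 0$ and $x\neq s$.

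\textbf{Step 1: construct a test point.} Apply \Cref{def:uc} with $\lambda=1/2$, the points $x$ and $s$, and the unit direction $z\defeq -g/\norm{g}$. This gives
\[
  y\defeq \tfrac12 x+\tfrac12 s-\tfrac14\alpha\norm{x-s}^q\,\frac{g}{\norm{g}}\in\cC .
\]

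\textbf{Step 2: use optimality of $s$.} Since $s\in\argmin_{y\in\cC}\innp{g,y}$, we have $\innp{g,s}\le\innp{g,y}$. Expanding the right-hand side gives
\[
  \innp{g,s}\le \tfrac12\innp{g,x}+\tfrac12\innp{g,s}-\tfrac14\alpha\norm{g}\,\norm{x-s}^q .
\]
Rearranging yields $\tfrac12\innp{g,x-s}\ge \tfrac14\alpha\norm{g}\norm{x-s}^q$. Multiplying by $2$ gives the claimed inequality with constant $\alpha/2$.

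The argument uses only that $s$ is some minimizer, so it holds for any atom-selection rule. The only point needing care is the choice $\lambda=1/2$, which produces exactly the factor $\lambda(1-\lambda)=1/4$. A general $\lambda$ would give $(1-\lambda)\innp{g,x-s}\ge\lambda(1-\lambda)\alpha\norm{g}\norm{x-s}^q$, that is, constant $\lambda\alpha$. Letting $\lambda\to1$ would even give constant $\alpha$, but matching the stated constant $\alpha/2$ only requires $\lambda=1/2$.

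The main obstacle is the affine-hull convention. If $\cC$ is not full-dimensional, the direction $z$ must lie in the linear span of $\operatorname{aff}(\cC)$. Following the paper's stated convention, I would read $g$ as its projection onto that span, which does not change $\innp{g,x-s}$. Then $z=-g/\norm{g}$ is admissible.
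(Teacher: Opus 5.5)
Your proof is correct and is essentially the paper's argument: the test point $\tfrac12(x+s)+\tfrac{\alpha}{4}\norm{x-s}^q z$ with $z=-g/\norm{g}$, compared against the optimality of $s$. Your side remarks, on the $\lambda$-dependent constant and on the affine-hull convention, are valid but not needed. Under the literal definition, an $(\alpha,q)$-uniformly convex set with at least two points already contains a ball around each midpoint and so is full-dimensional, so the projection issue never arises.
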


\begin{proof}
  \label{app:proof-uc-gap}
By optimality of $s$,
\[
  \innp{g, y-s} \ge 0
  \qquad \forall y\in\cC.
\]
If $g=0$, there is nothing to prove.
Otherwise, set $z=-g/\norm{g}$ and use \Cref{def:uc} at $\lambda=1/2$.
The point
\[
  y \defeq \frac{x+s}{2} + \frac{\alpha}{4}\norm{x-s}^q z
\]
belongs to $\cC$, hence
\[
  0
  \le
  \innp{g,y-s}
  =
  \frac12 \innp{g,x-s}
  -
  \frac{\alpha}{4}\norm{g}\,\norm{x-s}^q.
\]
Rearranging gives the claim.
\end{proof}

\subsection{LDS vs. uniform convexity}
\label{sec:lds-vs-uc}

\begin{proposition}[A uniformly-convex patch with residual gap yields local dual sharpness\leanverified]
\label{prop:patch-local-gap}
Let \(M\subseteq\cC\) and \(K\subseteq\RR^d\).
Assume that \(K\) is \((\alpha,q)\)-uniformly convex for some \(q\in\RR\) with \(q\ge 2\).
Suppose there exist \(\rho,\beta>0\) such that whenever \(x\in\cC\) satisfies
\[
  \dist(x,M) < \rho,
\]
then \(x\in K\), and for every \(g\in\RR^d\), if \(s\) is the atom selected by
the LMO for the linear objective \(y\mapsto \innp{g,y}\), then at least
one of the following holds:
\begin{enumerate}[leftmargin=2em]
  \item \(s\in K\) and \(s\) also supports \(K\) in direction \(g\), namely
  \[
    \innp{g,y-s} \ge 0
    \qquad \forall y\in K;
  \]
  \item
  \[
    \beta \norm{g} \le \innp{g,x-s}.
  \]
\end{enumerate}
Then the LMO satisfies local dual sharpness around \(M\)
with constants
\[
  \left(
    \min\left\{
      \frac{\alpha}{2},
      \frac{\beta}{\max\{1,{\bigl(\mathrm{diam}(\cC)\bigr)}^q\}}
    \right\},
    q
  \right).
\]
\end{proposition}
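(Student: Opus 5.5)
We prove the LDS inequality of \Cref{def:local-gap} directly, with the same \(\rho\) as in the hypothesis, by splitting according to which of the two alternatives holds. Fix \(x\in\cC\) with \(\dist(x,M)<\rho\), fix \(g\in\RR^d\), and let \(s\) be the selected atom. Write \(A_0\) for the displayed minimum. If \(g=0\), both sides vanish and there is nothing to prove, so assume \(g\neq0\). By hypothesis \(x\in K\).

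\emph{Alternative 1.} Here \(s\in K\) and \(s\in\argmin_{y\in K}\innp{g,y}\). Since \(x\in K\) and \(K\) is \((\alpha,q)\)-uniformly convex, \Cref{lem:uc-gap} applies with feasible set \(K\), point \(x\), and minimizer \(s\). This gives
\[
  \frac{\alpha}{2}\norm{g}\,\norm{x-s}^q\le\innp{g,x-s}.
\]
Because \(A_0\le\alpha/2\), the LDS inequality with constant \(A_0\) follows. The one point to check is that \Cref{lem:uc-gap} only uses membership of \(x,s\) in the uniformly convex set and optimality of \(s\) over that set. Its proof uses exactly these two facts: the point \(y\) it constructs lies in \(K\), and \(\innp{g,y-s}\ge0\) holds for \(y\in K\). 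This is the delicate step, since \(s\) need not minimize over \(K\) a priori; the first alternative supplies that support property explicitly.

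\emph{Alternative 2.} Here \(\beta\norm{g}\le\innp{g,x-s}\). Since \(x,s\in\cC\), we have \(\norm{x-s}\le D\defeq\mathrm{diam}(\cC)\), hence
\[
  \norm{x-s}^q\le D^q\le\max\{1,D^q\}.
\]
Therefore
\[
  \frac{\beta}{\max\{1,D^q\}}\norm{g}\,\norm{x-s}^q\le\beta\norm{g}\le\innp{g,x-s}.
\]
Again \(A_0\) is no larger than this coefficient, so the inequality holds with \(A_0\). The \(\max\{1,\cdot\}\) only guards against \(D=0\) and keeps \(A_0>0\) well defined.

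Since one of the two alternatives holds for every such \(x\) and \(g\), the LDS inequality holds with constants \((A_0,q)\) and the same \(\rho\). The only genuine obstacle is the transfer of \Cref{lem:uc-gap} from \(\cC\) to the patch \(K\) in Alternative 1. This is resolved by noting that the lemma is stated for an arbitrary uniformly convex set and its minimizer, and hypothesis 1 provides exactly that minimizer property on \(K\).
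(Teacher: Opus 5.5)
Your proposal is correct and follows the paper's proof essentially step for step. Both split on the two alternatives. In the first case, \Cref{lem:uc-gap} is applied to the patch $K$, using $x\in K$ and the support property of $s$ on $K$. In the second case, the diameter bound $\norm{x-s}^q\le\max\{1,D^q\}$ is used. The two cases are then combined by taking the minimum of the resulting coefficients.
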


\begin{proof}
  \label{app:proof-patch-local-gap}
Fix \(x\in\cC\) with \(\dist(x,M)<\rho\), fix \(g\in\RR^d\), and let \(s\) be
the atom selected by the LMO.
If \(g=0\), then both sides of the desired inequality vanish, so there is
nothing to prove.

Assume first that alternative (i) holds.
Then \(x\in K\) by hypothesis, \(s\in K\), and \(s\) supports \(K\) in
direction \(g\).
Applying \Cref{lem:uc-gap} to the uniformly convex set \(K\) yields
\[
  \frac{\alpha}{2}\norm{g}\,\norm{x-s}^q
  \le
  \innp{g,x-s}.
\]

Assume next that alternative (ii) holds.
Because both \(x\) and \(s\) belong to \(\cC\), one has
\[
  D_\cC \defeq \mathrm{diam}(\cC),
  \qquad
  \norm{x-s}\le D_\cC,
\]
and since \(r\mapsto r^q\) is increasing on the nonnegative reals,
\[
  \norm{x-s}^q \le D_\cC^q \le \max\{1,D_\cC^q\}.
\]
Therefore
\[
  \frac{\beta}{\max\{1,D_\cC^q\}}
  \norm{g}\,\norm{x-s}^q
  \le
  \beta \norm{g}
  \le
  \innp{g,x-s}.
\]

Taking the minimum of the two coefficients gives
\[
  \min\left\{
    \frac{\alpha}{2},
    \frac{\beta}{\max\{1,D_\cC^q\}}
  \right\}
  \norm{g}\,\norm{x-s}^q
  \le
  \innp{g,x-s},
\]
which is exactly the claimed local dual sharpness bound.
\end{proof}

\begin{example}[Examples beyond globally uniformly convex sets]
\label{rem:local-gap-examples}
The local dual sharpness hypothesis is genuinely weaker than global uniform
convexity.
Two explicit families in $\RR^2$ to keep in mind are:
\begin{enumerate}[leftmargin=2em]
  \item the stadium
  \[
    \cC_{\mathrm{stad}}(a)
    \defeq
    \bigl([-a,a]\times\{0\}\bigr) + B_2(0,1),
    \qquad a>0,
  \]
  together with any compact \(M\) contained in the relative interior of the
  right rounded cap, equivalently of the open semicircle
  \[
    \Gamma_a^+
    \defeq
    \{(a+\cos\theta,\sin\theta): -\pi/2<\theta<\pi/2\},
    \qquad
    K_a^+ \defeq (a,0)+B_2(0,1);
  \]
  \item the truncated disk
  \[
    \cC_{\mathrm{tr}}(b)
    \defeq
    B_2(0,1)\cap\{x_1\le b\},
    \qquad 0<b<1,
  \]
  together with any compact \(M\) contained more specifically in the lower arc
  \[
    \Gamma_b^-
    \defeq
    \{(x_1,x_2)\in \partial B_2(0,1) : x_2<-\sqrt{1-b^2}\}.
  \]
\end{enumerate}
Neither family is globally uniformly convex: the stadium contains a line
segment, while the truncated disk contains a facet.
Nevertheless, in both cases the active curved patch has a Euclidean-ball
geometry, hence \((\alpha,2)\)-uniformly convex, and the complementary LMO
directions retain a strictly positive residual support gap.
So \Cref{prop:patch-local-gap} applies and yields local dual sharpness with
\(q=2\). Higher-dimensional capsules and truncated Euclidean balls satisfy the
same patch/residual-gap criterion; the planar examples are used only to
visualize the mechanism.
\end{example}

\begin{proof}[Proof and details for the local-gap examples\leanverified]
  \label{app:proof-local-gap-examples}
We check the hypotheses of \Cref{prop:patch-local-gap} separately for the two examples.

\begin{enumerate}[leftmargin=2em]
  \item \textbf{Stadium.}
  Let
  \[
    \overline{\Gamma}_a^+
    =
    K_a^+\cap \partial \cC_{\mathrm{stad}}(a)
    =
    \{(a+\cos\theta,\sin\theta): -\pi/2\le \theta\le \pi/2\}.
  \]
  Since \(M\) is a compact subset of the relative interior of
  \(\overline{\Gamma}_a^+\),
  there exists
  \(\rho>0\) such that the closed neighborhood
  \[
    U_a
    \defeq
    \{x\in \cC_{\mathrm{stad}}(a) : \dist(x,M)\le \rho\}
  \]
  is contained in \(K_a^+\) and does not meet the two endpoints
  \((a,\pm 1)\).
  The set \(K_a^+\) is a translate of the Euclidean unit disk, hence
  \((\alpha,2)\)-uniformly convex for some \(\alpha>0\).

  Fix \(x\in U_a\) and \(g\in\RR^2\). If \(g=0\), alternative~(ii) is
  immediate, so assume \(g\neq 0\) and write \(u\defeq g/\norm{g}\).
  If the selected atom \(s\) lies in \(K_a^+\), then in fact \(s\) is the
  minimizer of the linear functional \(y\mapsto \innp{u,y}\) over the disk
  \(K_a^+\), so alternative~(i) in \Cref{prop:patch-local-gap} holds.

  It remains to consider the case where the selected atom lies outside
  \(K_a^+\).
  This can only happen for directions with \(u_1\ge0\).
  To get a uniform constant, consider the compact closure
  \[
    \Sigma_a \defeq \{u\in\RR^2 : \norm{u}=1,\ u_1\ge 0\}.
  \]
  For \(u\in\Sigma_a\), the minimum of \(y\mapsto \innp{u,y}\) over the stadium
  is
  \[
    m_a(u)
    =
    \min_{y\in \cC_{\mathrm{stad}}(a)} \innp{u,y}
    =
    -a u_1 - 1,
  \]
  because \(\cC_{\mathrm{stad}}(a)=([-a,a]\times\{0\})+B_2(0,1)\).
  Hence, for every selected atom \(s\),
  \[
    \innp{g,x-s}
    =
    \norm{g}\bigl(\innp{u,x}-m_a(u)\bigr).
  \]
  Define
  \[
    \Phi_a(x,u)\defeq \innp{u,x}-m_a(u)
    \qquad (x\in U_a,\ u\in\Sigma_a).
  \]
  This function is continuous on the compact set \(U_a\times \Sigma_a\).
  If \(\Phi_a(x,u)=0\), then \(x\) is itself a minimizer of
  \(y\mapsto \innp{u,y}\) over \(\cC_{\mathrm{stad}}(a)\).
  For \(u_1>0\), every minimizer lies on the left rounded cap.
  For \(u_1=0\), every minimizer lies on one of the horizontal line segments
  together with its two endpoints.
  Since \(U_a\subseteq K_a^+\setminus\{(a,\pm 1)\}\), neither possibility can
  occur.
  Therefore \(\Phi_a>0\) on \(U_a\times \Sigma_a\), and compactness gives
  \[
    \beta_a
    \defeq
    \min_{(x,u)\in U_a\times \Sigma_a} \Phi_a(x,u)
    >
    0.
  \]
  Consequently, whenever the selected atom lies outside \(K_a^+\),
  \[
    \innp{g,x-s}
    \ge
    \beta_a \norm{g},
  \]
  so alternative~(ii) in \Cref{prop:patch-local-gap} holds as well.
  The proposition therefore yields local dual sharpness with \(q=2\)
  around \(M\).

  \item \textbf{Truncated disk.}
  Write
  \[
    h \defeq \sqrt{1-b^2},
    \qquad
    K \defeq B_2(0,1).
  \]
  Since \(M\subseteq \Gamma_b^-\) is compact, there exist \(\rho,\eta>0\) such
  that
  \[
    U_b
    \defeq
    \{x\in \cC_{\mathrm{tr}}(b) : \dist(x,M)\le \rho\}
    \subseteq
    \{x\in \RR^2 : x_2 \le -h-\eta\}.
  \]
  In particular, \(U_b\) is disjoint from the truncating facet
  \[
    F_b \defeq \{(b,t): -h\le t\le h\}.
  \]
  The set \(K=B_2(0,1)\) is again \((\alpha,2)\)-uniformly convex for some
  \(\alpha>0\).

  Fix \(x\in U_b\) and \(g\in\RR^2\). As before, the case \(g=0\) is
  immediate from alternative~(ii), so assume \(g\neq 0\) and write
  \(u\defeq g/\norm{g}\).
  If the full-disk minimizer \(-u\) satisfies \(-u_1\le b\), then
  \(-u\in \cC_{\mathrm{tr}}(b)\), so the selected atom is exactly \(-u\), which
  supports the disk \(K\) in direction \(g\).
  Thus alternative~(i) holds.

  It remains to consider directions for which \(-u_1>b\), equivalently
  \(u_1<-b\).
  For a uniform lower bound we again pass to the compact closure
  \[
    \Sigma_b \defeq \{u\in\RR^2 : \norm{u}=1,\ u_1\le -b\}.
  \]
  For every \(u\in\Sigma_b\), minimizing \(y\mapsto \innp{u,y}\) over
  \(\cC_{\mathrm{tr}}(b)\) forces the minimizer onto the facet \(F_b\), and the
  optimal value is
  \[
    m_b(u)
    =
    \min_{y\in \cC_{\mathrm{tr}}(b)} \innp{u,y}
    =
    b u_1 - h\lvert u_2\rvert.
  \]
  Hence, for every selected atom \(s\),
  \[
    \innp{g,x-s}
    =
    \norm{g}\bigl(\innp{u,x}-m_b(u)\bigr).
  \]
  Define
  \[
    \Phi_b(x,u)\defeq \innp{u,x}-m_b(u)
    \qquad (x\in U_b,\ u\in\Sigma_b).
  \]
  This function is continuous on the compact set \(U_b\times \Sigma_b\).
  If \(\Phi_b(x,u)=0\), then \(x\) is a minimizer of
  \(y\mapsto \innp{u,y}\) over \(\cC_{\mathrm{tr}}(b)\), hence \(x\in F_b\).
  But \(U_b\cap F_b=\varnothing\), a contradiction.
  Therefore \(\Phi_b>0\) on \(U_b\times \Sigma_b\), and compactness gives
  \[
    \beta_b
    \defeq
    \min_{(x,u)\in U_b\times \Sigma_b} \Phi_b(x,u)
    >
    0.
  \]
  Consequently, whenever the selected atom does not support the full disk,
  \[
    \innp{g,x-s}
    \ge
    \beta_b \norm{g},
  \]
  which is alternative~(ii).
  Applying \Cref{prop:patch-local-gap} finishes the proof.
\end{enumerate}
\end{proof}

\begin{remark}[Strict convexity alone is not enough\leanverified]
\label{rem:strict-not-enough}
The local dual sharpness hypothesis from \Cref{def:local-gap} is a \emph{power-type}
statement, so compact strict convexity by itself does not suffice.
Indeed, after restricting to a sufficiently small neighborhood and closing the
body smoothly elsewhere, one can build a compact strictly convex body in $\RR^2$ whose boundary near a support point has the superflat graph
\[
  y = e^{-1/x^2}
  \qquad (x\neq 0),
  \qquad
  y(0)=0.
\]
For the vertical normal direction, the support gap at a boundary point
\((u,e^{-1/u^2})\) is then \(e^{-1/u^2}\), while the chord length is
comparable to \(|u|\).
Hence
\[
  e^{-1/u^2} = o(|u|^q)
  \qquad \text{for every fixed } q,
\]
so no finite-exponent inequality of the form
\[
  A \norm{g}\,\norm{x-s}^q \le \innp{g,x-s}
\]
can hold near that support point.
Thus compact strict convexity is weaker than the power-type local dual
sharpness framework used here.
\end{remark}

\subsection{\texorpdfstring{An example with $o(1/t)$ convergence without local HEB}{An example with o(1/t) convergence without local HEB}}
\label{sec:superflat-short-step}

A natural question to ask is if $o(1/t)$ convergence under LDS implies some polynomial convergence rate of the form $O(t^{-1-\eta})$ for some $\eta>0$. We will now show that this is not the case by providing an instance whose primal gap is $\Theta(1/(t(\log t)^4))$ and hence is not $O(t^{-1-\eta})$ for any $\eta>0$.

Fix $a\in(0,1/2)$ and define the convex profile
\[
  \phi(u)
  \defeq
  \begin{cases}
    0, & u\le0,\\
    e^{-1/u}, & 0<u\le a,\\
    e^{-1/a}+e^{-1/a}a^{-2}(u-a), & u>a.
  \end{cases}
\]
We will now show that for the instance
$$
\min_{x\in B_2(0,1)} \phi(1-x_1),
$$
which satisfies global LDS but no local HEB of finite order, we obtain a primal-gap convergence of order $\Theta(1/(t(\log t)^4))$.

\begin{proposition}[A logarithmic improvement over $1/t$]
\label{prop:superflat-short-step}
Let $\cC=B_2(0,1)\subseteq\RR^2$, $p=e_1$, and
$f(x)\defeq\phi(1-x_1)$. Let $\widehat L>0$ be any Lipschitz constant for
$\phi'$, and run the global short-step rule with smoothness parameter
$L=\widehat L$ from $x_0=(1-u_0)e_1$, where $u_0\in(0,a)$.
Then $\cC$ satisfies global LDS with constants $(1/2,2)$, $f$ is convex and
$\widehat L$-smooth with unique constrained minimizer $p$, and $f$ satisfies
no local HEB of any finite order. Nevertheless, its short-step primal gap obeys
\[
  \lim_{t\to\infty}
  \frac{t(\log t)^4}{\widehat L}\bigl(f(x_t)-f(p)\bigr)
  =1.
\]
In particular,
\[
  f(x_t)-f(p)
  \sim
  \frac{\widehat L}{t(\log t)^4}
  =o(1/t),
\]
but this gap is not $O(t^{-1-\eta})$ for any $\eta>0$.
\end{proposition}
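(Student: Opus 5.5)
The instance reduces to a one-dimensional recursion that can be analysed exactly. I would first verify the structural claims, then reduce the iteration, and finally do the asymptotics.

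\emph{Structural claims.} For global LDS on the unit ball, take $g\neq 0$ and $u\defeq g/\norm{g}$. The LMO returns $s=-u$, so $\innp{g,x-s}=\norm{g}(1+\innp{u,x})$. Also $\norm{x-s}^2=\norm{x}^2+1+2\innp{u,x}\le 2(1+\innp{u,x})$ because $\norm{x}\le1$. This gives the constants $(1/2,2)$; alternatively I would cite \Cref{prop:uc-implies-local-gap}.

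For $\phi$, on $(0,a)$ one has $\phi'(u)=e^{-1/u}u^{-2}$ and $\phi''(u)=e^{-1/u}(1-2u)u^{-4}\ge0$, since $a<1/2$. The pieces glue in a $C^1$ way at $0$ and at $a$, with slopes $0$ and $e^{-1/a}a^{-2}$. Hence $\phi'$ is continuous, nondecreasing and bounded, and it is Lipschitz because $\phi''$ is bounded on $(0,a)$. It follows that $f$ is convex and $\widehat L$-smooth. On the ball, $f(x)=0$ iff $x_1=1$ iff $x=e_1$, so $p$ is the unique minimizer.

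No finite-order HEB can hold. Near $p$ on the axis, $\dist(x,\cM)=u$ while $f-f^\star=e^{-1/u}=o(u^r)$ for every $r$.

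\emph{Reduction to a scalar recursion.} Suppose $x_t=(1-u_t)e_1$ with $u_t\in(0,a)$. Then $\nabla f(x_t)=-\phi'(u_t)e_1$, so $s_t=e_1$, $d_t=u_t$ and $g_t=\phi'(u_t)u_t$. The short step is $\gamma_t=\min\{\phi'(u_t)/(\widehat L u_t),1\}$.

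Since $\phi'(0)=0$ and $\phi'$ is $\widehat L$-Lipschitz, we get $\phi'(u)\le \widehat L u$. Strict inequality holds because $\phi'$ is strictly convex near $0$; I will check this small point, or else handle $\gamma_t=1$ separately. Hence $\gamma_t<1$, and by induction
\[
  u_{t+1}=u_t-\frac{e^{-1/u_t}}{\widehat L u_t^2}\in(0,u_t).
\]
So $u_t\downarrow$ some limit $u_\infty\ge0$. The limit must be a fixed point, which forces $u_\infty=0$. The primal gap is $F_t=e^{-1/u_t}$.

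\emph{Asymptotics (main obstacle).} Set $v_t\defeq1/u_t\to\infty$. Then
\[
  v_{t+1}-v_t=\frac{u_t-u_{t+1}}{u_tu_{t+1}}=\frac{v_t^4e^{-v_t}}{\widehat L}\,(1+o(1)),
\]
because $(u_t-u_{t+1})/u_t=e^{-v_t}v_t/\widehat L\to0$. The continuous analogue $\dot v=v^4e^{-v}/\widehat L$ suggests the potential $w_t\defeq e^{v_t}v_t^{-4}$.

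Since $v_{t+1}-v_t\to0$, a first-order Taylor expansion of $\psi(v)=e^vv^{-4}$, whose derivative is $\psi'(v)=e^vv^{-4}(1-4/v)$, gives
\[
  w_{t+1}-w_t=\psi'(\xi_t)(v_{t+1}-v_t)\to 1/\widehat L .
\]
Here I use $\psi'(\xi_t)/\psi'(v_t)\to1$, which holds because the increments vanish. By Stolz--Ces\`aro, $w_t\sim t/\widehat L$. Taking logarithms gives $v_t=\log t+O(\log\log t)$, so $v_t\sim\log t$. Therefore
\[
  F_t=e^{-v_t}=\frac{1}{w_tv_t^4}\sim\frac{\widehat L}{t(\log t)^4}.
\]
This is the claimed limit. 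The last assertion follows because $t^{1+\eta}\cdot\widehat L/(t(\log t)^4)\to\infty$ for every $\eta>0$.

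The delicate step is controlling the $(1+o(1))$ factors uniformly enough to pass to $w_{t+1}-w_t\to1/\widehat L$. I would handle this with explicit two-sided bounds on $v_{t+1}-v_t$, valid for all large $t$.
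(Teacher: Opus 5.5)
Your proposal is correct and follows essentially the same route as the paper. It uses the same reduction to the scalar recursion $u_{t+1}=u_t-e^{-1/u_t}/(\widehat L u_t^2)$, the reciprocal $v_t=1/u_t$ (the paper's $y_t$), and the potential $e^{v}v^{-4}$ (the paper's $R/\widehat L$) whose increments converge by the mean value theorem. It then finishes with Ces\`aro averaging and a logarithmic inversion giving $v_t\sim\log t$.

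Only small repairs are needed:
\begin{enumerate}[leftmargin=2em]
  \item Strictness of $\phi'(u)<\widehat L u$ on all of $(0,a)$ follows from $\phi'(u)=\int_0^u\phi''(s)\,ds$, $0\le\phi''\le\widehat L$, and $\phi''(s)\to0$ as $s\downarrow0$. Convexity of $\phi'$ is not enough, since it fails on part of $(0,a)$ when $a>(3-\sqrt3)/6$. Your fallback $\gamma_t=1$ is not available either, since it sends $x_{t+1}$ to $p$ and makes the limit $0$.
  \item One has $(u_t-u_{t+1})/u_t=v_t^3e^{-v_t}/\widehat L$, not $v_te^{-v_t}/\widehat L$.
  \item The exact identity $v_{t+1}-v_t=v_t^4e^{-v_t}/\bigl(\widehat L(1-v_t^3e^{-v_t}/\widehat L)\bigr)$ makes the step you call delicate immediate; no two-sided bounds are required.
  \item Citing \Cref{prop:uc-implies-local-gap} would only yield constants $(1/4,2)$, because the unit ball is at best $(1/2,2)$-uniformly convex. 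So you do need your direct computation for $(1/2,2)$.
\end{enumerate}
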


\begin{proof}
We first verify the basic properties of the instance. On $0<u<a$,
\[
  \phi'(u)=\frac{e^{-1/u}}{u^2},
  \qquad
  \phi''(u)=\frac{e^{-1/u}(1-2u)}{u^4}\ge0.
\]
The values and first derivatives match at $0$ and $a$; on each of the two complementary
intervals, i.e., $(-\infty,0]$ and $[a,\infty)$, the derivative is constant. Thus $\phi$ is convex and $C^1$, and $\phi'$ is
Lipschitz because $\phi''$ is bounded on $(0,a)$. As $f(x) = \phi(1-x_1)$, it follows that $f$ is also
convex and smooth. Moreover, every $x\in\cC$ has $ 1-x_1 \geq 0$, with equality only at
$p=e_1$, hence $p$ is the unique constrained minimizer.

Next, we verify LDS for our instance via a direct computation. For $g\ne0$, write $v=g/\norm{g}$; the unit-ball
LMO is uniquely $S(g)=-v$. Since $\norm{x}\le1$,
\[
  \norm{x-S(g)}^2
  =\norm{x+v}^2
  \le 2\bigl(1+\innp{v,x}\bigr),
\]
whereas
\[
  \innp{g,x-S(g)}
  =\norm{g}\bigl(1+\innp{v,x}\bigr).
\]
Hence
$\frac12\norm{g}\norm{x-S(g)}^2\le\innp{g,x-S(g)}$ globally; the case
$g=0$ is immediate.

Now, we verify that $f(x)$ does not satisfy local HEB of any order. To this end, for $x(u)=(1-u)e_1$ with $u\downarrow0$, one has
$\dist(x(u),\{p\})=u$ and $f(x(u))-f(p)=e^{-1/u}$. Therefore, for every finite
$r>0$,
\[
  \frac{\dist(x(u),\{p\})^r}{f(x(u))-f(p)}
  =u^r e^{1/u}\xrightarrow{u\downarrow0}\infty,
\]
which rules out a local HEB of finite order.

It remains to compute the trajectory. If $x_t=(1-u_t)e_1$ with
$0<u_t<a$ (as holds initially and will be preserved by the update), then we have for the gradient
$\nabla f(x_t)=-\phi'(u_t)e_1$, for the Frank--Wolfe vertex $s_t = S(\nabla f(x_t)) = S(- \phi'(u_t)e_1) = e_1$, for the Frank--Wolfe gap $g_t = \innp{\nabla f(x_t), x_t-s_t} = \innp{- \phi'(u_t)e_1, (1-u_t) e_1 - e_1} = \innp{ \phi'(u_t)e_1, u_t e_1} = \phi'(u_t) u_t$, and for the distance $d_t = \norm{x_t - s_t} = u_t$.
Since $\phi'(0)=0$ and $\phi'$ is $\widehat L$-Lipschitz,
$\phi'(u_t)\le\widehat L u_t$ (strictly for $u_t>0$ in this example).
Thus the short step step size is
$\gamma_t = \frac{g_t}{Ld_t^2} = \phi'(u_t)/(\widehat L u_t)$ and with the update
\begin{align*}
  x_{t+1} & = x_t - \gamma_t (x_t - s_t) \\
\Leftrightarrow (1-u_{t+1}) e_1 & = (1-u_{t}) e_1 - \gamma_t ((1-u_{t}) e_1 - e_1) \\
\Leftrightarrow u_{t+1} & = u_t - \gamma_t u_t,
\end{align*}
and plugging in the definition of $\gamma_t$ we obtain
\begin{equation}
  \label{eq:superflat-u-recurrence}
  u_{t+1}
  =u_t-\frac{e^{-1/u_t}}{\widehat L u_t^2}.
\end{equation}
The strict bound $\gamma_t<1$ implies $0<u_{t+1}<u_t<a$, so this trajectory form is indeed preserved. This sequence decreases to zero: if it had a positive limit $\bar u>0$, then the decrement in
\eqref{eq:superflat-u-recurrence} would converge to the positive constant
$e^{-1/\bar u} / (\widehat L\bar u^2)$, contradicting $u_t \geq 0$.

As before we are going to analyze reciprocals. To this end, set $y_t\defeq1/u_t$ and
$b_t\defeq \widehat L^{-1}e^{-y_t}y_t^3$. Then $b_t\to0$ as $u_t \to 0$ and the recurrence
becomes
\[
  y_{t+1}=\frac{y_t}{1-b_t}
  \quad \text{ and } \quad
  y_{t+1}-y_t
  =\frac{e^{-y_t}y_t^4}{\widehat L(1-b_t)},
\]
which can be seen as follows: for the first equation write
$$
\frac{1}{y_{t+1}} = u_{t+1} = u_t-\frac{e^{-1/u_t}}{\widehat L u_t^2} = \frac{1}{y_t} - \frac{e^{-y_t}y_t^2}{\widehat L} = \frac{1}{y_t} - \frac{b_t}{y_t} = \frac{1-b_t}{y_t},
$$
and for the second equation now observe
$$
y_{t+1} - y_t = \frac{y_t}{1-b_t} - y_t = \frac{y_t b_t}{1-b_t} = \frac{e^{-y_t}y_t^4}{\widehat L(1-b_t)},
$$
via plugging in the definition of $b_t$ in the numerator in the last step. In particular, $y_{t+1}-y_t \to 0$, as $e^{-y_t}y_t^4 \to 0$ for $u_t \to 0$.

Define $R(y)\defeq\widehat L e^y/y^4$. Since $F_t=e^{-y_t}$ and by the
formula for $y_{t+1}-y_t$ above,
\[
  R(y_t)=\frac{\widehat L}{y_t^4F_t}
  \quad \text{ and } \quad
  R(y_t)=\frac{1}{(1-b_t)(y_{t+1}-y_t)}
  \sim\frac{1}{y_{t+1}-y_t}.
\]
We now show that the increments of $R(y_t)$ tend to one. Differentiating gives
\[
  R'(y)=\frac{\widehat L e^y}{y^4}\left(1-\frac{4}{y}\right).
\]
For every sufficiently large $t$, the mean value theorem ensures that there exists some
$\xi_t\in(y_t,y_{t+1})$ such that
\begin{align*}
  R(y_{t+1})-R(y_t)
  &=R'(\xi_t)(y_{t+1}-y_t)\\
  &=\frac{\widehat L e^{\xi_t}}{\xi_t^4}
    \left(1-\frac{4}{\xi_t}\right)
    \frac{e^{-y_t}y_t^4}{\widehat L(1-b_t)}\\
  &=\frac{e^{\xi_t-y_t}(y_t/\xi_t)^4(1-4/\xi_t)}{1-b_t}.
\end{align*}
As $\xi_t\in(y_t,y_{t+1})$, we have
\[
  0\le\xi_t-y_t\le y_{t+1}-y_t \longrightarrow 0,
\]
and hence $e^{\xi_t-y_t}\to1$. Moreover,
\[
  0\le\frac{\xi_t}{y_t}-1
  \le\frac{y_{t+1}-y_t}{y_t}
  \longrightarrow0,
\]
so $(y_t/\xi_t)^4\to1$. Finally, $\xi_t\ge y_t\to\infty$ gives
$1-4 / \xi_t \to 1$, while $b_t\to0$ gives $(1-b_t)^{-1}\to1$. Thus every factor in the
formula above tends to one, and hence
\[
  R(y_{t+1})-R(y_t)\longrightarrow1.
\]

Telescoping from any fixed sufficiently large $T$ and then dividing by $t$ gives
\[
  \frac{R(y_t)}{t}
  =
  \frac{R(y_T)}{t}
  +\frac{t-T}{t}
   \left[
     \frac{1}{t-T}\sum_{k=T}^{t-1}\bigl(R(y_{k+1})-R(y_k)\bigr)
   \right]
  \longrightarrow1,
\]
because the bracketed Ces\`aro average tends to one and $(t-T)/t\to1$. Thus
$$
\frac{R(y_t)}{t} = \frac{\widehat{L}}{t y_t^4 F_t} \longrightarrow 1,
$$
and hence $\frac{t y_t^4 F_t}{\widehat{L}} \to 1$. It remains to show that $y_t / \log t \to 1$, in order to obtain $\frac{t(\log t)^4F_t}{\widehat L} \to 1$. To this end, more explicitly, the preceding positive ratio satisfies
$R(y_t)/t=1+o(1)$. Applying the logarithm to this ratio we have
\[
  \log \frac{R(y_t)}{t}
  =\log(1+o(1))=o(1).
\]
Using $R(y)=\widehat L e^y/y^4$ we can expand the logarithm
\[
  \log\widehat L+y_t-4\log y_t-\log t=o(1).
\]
Since $y_t\to\infty$, division by $y_t$ yields
\[
  \frac{\log t}{y_t}
  =1-4\frac{\log y_t}{y_t}
   +\frac{\log\widehat L}{y_t}
   +o\!\left(\frac{1}{y_t}\right)
  \longrightarrow1,
\]
so that $y_t/\log t \to 1$ follows and so
\[
  \frac{t(\log t)^4F_t}{\widehat L}\longrightarrow1.
\]
Finally, $t^{1+\eta} F_t \sim \widehat L t^\eta/(\log t)^4\to\infty$ for every
$\eta>0$, showing the last assertion.
\end{proof}

\Cref{fig:superflat-short-step-tail} displays the two-dimensional
level sets and selected Frank--Wolfe iterates alongside the primal tail. Since
$f(x)=\phi(1-x_1)$, the level sets are vertical sections of the ball. The slow
convergence of $y_t/\log t$ to one explains the visible finite-time separation
from the final $L/(t(\log t)^4)$ equivalent.

\begin{figure}[t]
  \centering
  \includegraphics[width=\textwidth]{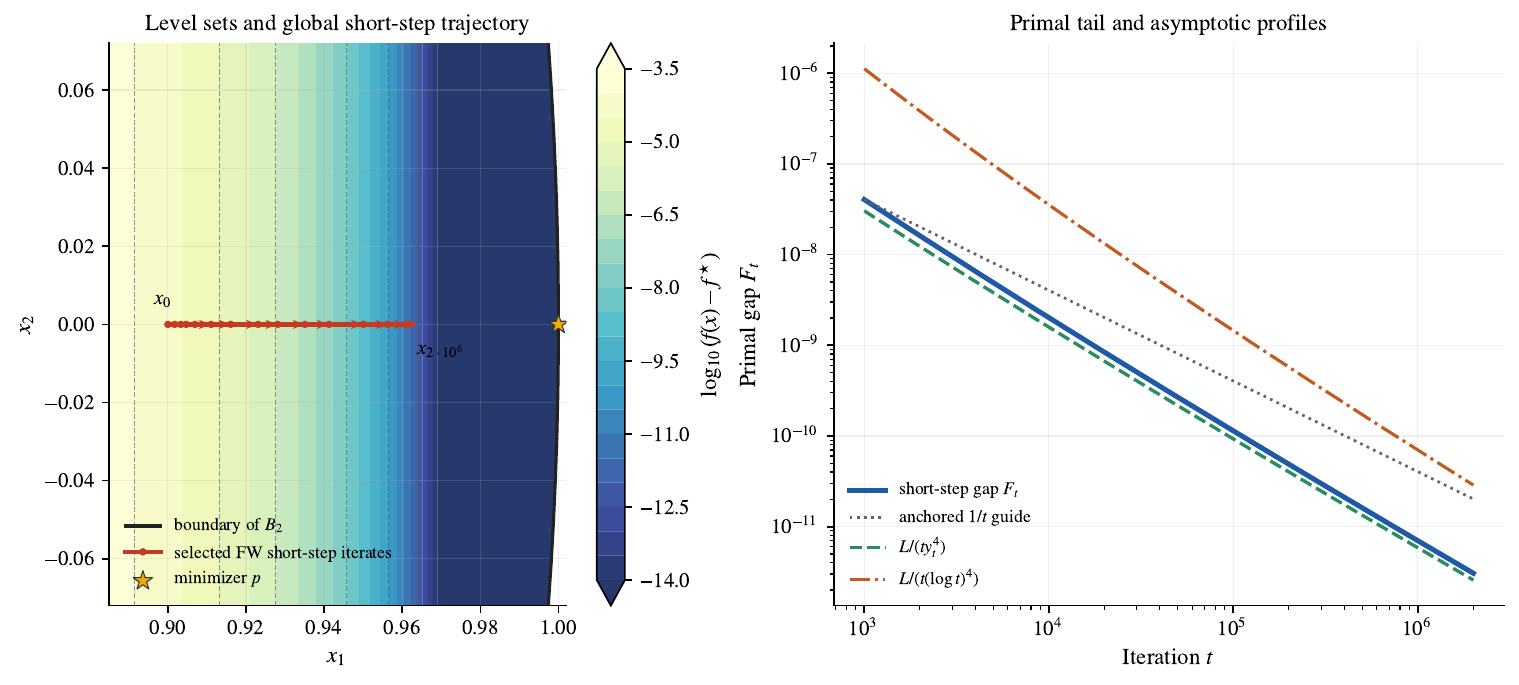}
  \caption{Fixed superflat short-step instance from
  \Cref{prop:superflat-short-step}, with $a=1/4$, $u_0=0.1$, and
  $\widehat L\approx2.55$. Left: level sets on a zoom of the right cap of
  $B_2(0,1)$ and selected iterates of global-short-step Frank--Wolfe; the star
  marks the minimizer $p=e_1$. Right: the primal gap, an anchored $1/t$ guide,
  the intermediate profile $\widehat L/(t y_t^4)$, and the final equivalent
  $\widehat L/(t(\log t)^4)$. The displayed tail runs from $t=10^3$ to
  $t=2\cdot10^6$.}
  \label{fig:superflat-short-step-tail}
\end{figure}

\begin{remark}[Dependence on step size rule]
For our objective \(f(x)=\phi(1-x_1)\), at every nonoptimal point the
gradient is a negative multiple of \(e_1\), and therefore the LMO uniquely
selects the minimizer \(p\). As there are no other minimizers on the segment
\([x_0,p]\), exact line search takes \(\gamma_0=1\), while the classical
open-loop rule has \(\gamma_0=1\) by definition. Thus both rules reach
\(x_1=p\). At the next iteration, the Frank--Wolfe gap satisfies
\[
  g_1=\innp{\nabla f(p),p-s_1}=0,
\]
independent of any LMO tie-breaking at the zero gradient. The Frank--Wolfe
gap is available in each iteration from the gradient and the already computed
LMO atom. Thus, under the standard zero-gap stopping convention, both methods
terminate at \(p\) before taking a second update; in particular, the
Frank--Wolfe algorithm with the open-loop rule does not subsequently move
away from the optimum. Hence
\Cref{prop:superflat-short-step} answers the existence question only for
global short steps; analogous examples for exact line search and classical
open loop remain open but are likely possible to construct.
\end{remark}

\subsection{An additional example with known \texorpdfstring{$q$}{q} and \texorpdfstring{$r$}{r}}
\label{sec:l4-additional-example}
In this section we provide an example for which we can explicitly compute the predicted convergence rate via \Cref{thm:local-gap-heb-rate} and compare this to the numerically obtained rates, showing that our estimation is empirically tight on long trajectories.

Consider
\[
  \cC_4\defeq\{x\in\RR^2:\norm{x}_4\le1\},
  \qquad
  f_4(x)\defeq\frac12\norm{x-e_1}_2^2.
\]
The set is uniformly convex of power type $q=4$, hence satisfies LDS$(4)$,
and strong convexity of $f_4$ gives a local HEB of order $r=2$. Therefore
\Cref{thm:local-gap-heb-rate} predicts
$F_t=O(t^{-rq/(rq-2)})=O(t^{-4/3})$ for all three step-size rules. Numerically, the fitted exponents are $1.331534$ for the short-step trajectory (and the exact-line-search trajectory, since they coincide) and $1.330033$ for classical open loop, with $R^2>0.999999$ in both cases. \Cref{fig:l4-quantitative-rates} shows the corresponding scaled tails. We stress that this is only numerical evidence on a long tail and not an asymptotic verification in the strict sense.

\begin{figure}[t]
  \centering
  \includegraphics[width=\textwidth]{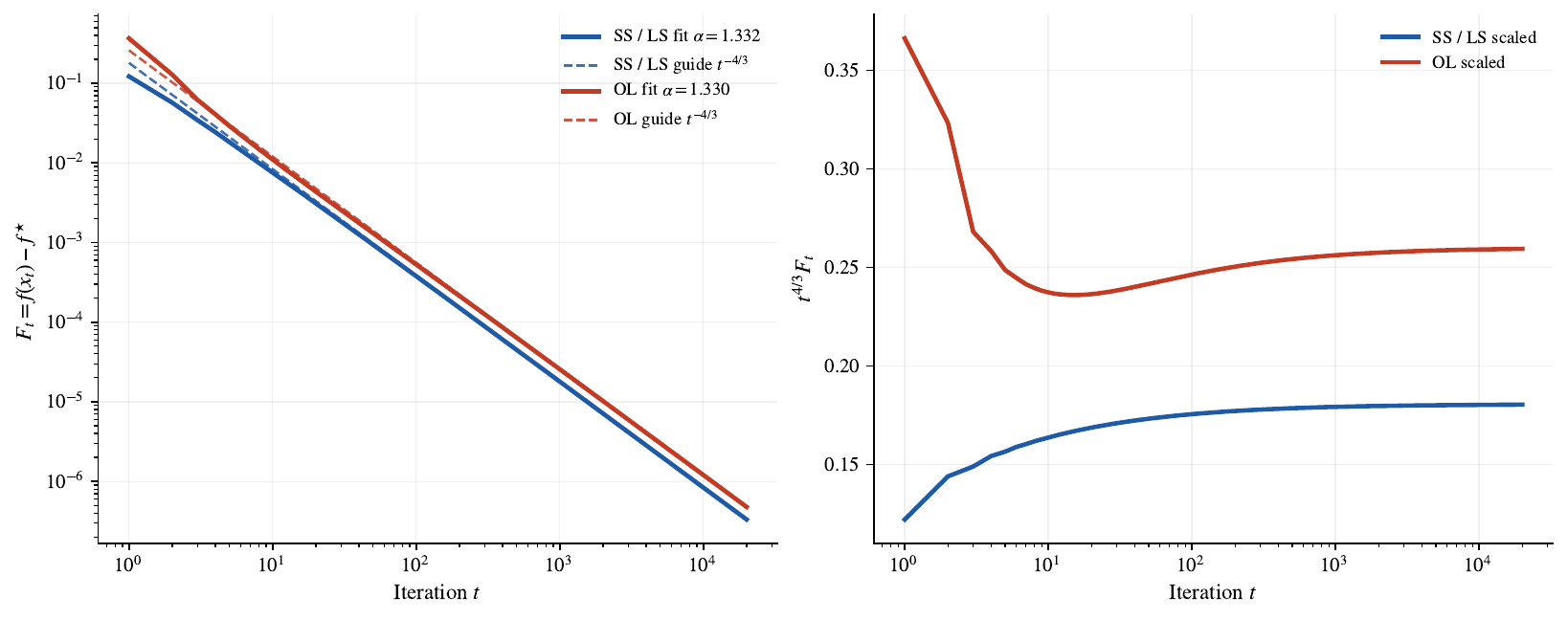}
  \caption{Quadratic $\ell_4$-ball example with known LDS power
  $q=4$ and HEB order $r=2$. Left: primal gaps and $t^{-4/3}$ guides.
  Right: the scaled quantities $t^{4/3}F_t$, which are approximately flat on
  the tested tail. The plot is a deterministic finite-horizon check of the
  predicted exponent.}
  \label{fig:l4-quantitative-rates}
\end{figure}

\subsection{Generalized Open-Loop Step-Sizes}

\begin{remark}[What changes for the fixed \(\ell/(t+\ell)\) family]
\label{rem:ol-family}
For every fixed integer \(\ell\ge 2\) and every LDS power \(q\ge2\), the same
minimizer-based open-loop argument under local dual sharpness extends to the
rule
\[
  \gamma_t = \frac{\ell}{t+\ell}.
\]
The proof uses the rescaled gaps
\[
  h_t \defeq (t+\ell)F_t
\]
instead of \((t+2)F_t\), and the corresponding coefficients become
\[
  a_t \defeq \frac{t(t+\ell+1)}{(t+\ell)^2},
  \qquad
  c_t \defeq \frac{t+\ell+1}{(t+\ell)^2}.
\]
Accordingly, the curvature and error tails acquire factors of order
\(\ell^2\). The direct shifted-bound branch and the Young-correction branch
from the classical open-loop proof persist, with the relevant small-branch and threshold
constants enlarged by factors depending only on \(\ell\).
The one-step drop along a dyadic block is therefore weaker by an
\(\ell\)-dependent constant factor, but for fixed \(\ell\) it still yields a
positive block drop, so the same contradiction argument proves
\[
  (t+\ell)\bigl(f(x_t)-f^\star\bigr)\to 0.
\]
The minimizer-set reduction from \Cref{thm:ol-set} is
unchanged, and \Cref{prop:uc-implies-local-gap} immediately yields the
uniformly-convex family specialization.
\end{remark}

\IfDeferredProofs{%

\subsection{Deferred Proofs}

\PrintDeferredProofs

\PrintDeferredProofSections
}

\IfNeuripsBuild{%
\clearpage

}
 \end{document}